\theoremstyle{plain}
\newtheorem{theorem}{Theorem}
\newtheorem{proposition}[theorem]{Proposition}
\newtheorem{lemma}[theorem]{Lemma}
\theoremstyle{definition}
\newtheorem{definition}[theorem]{Definition}
\newtheorem{example}[theorem]{Example}
\newcommand\N{\mathbb{N}}
\newcommand\R{\mathbb{R}}
\newcommand{\DD}[2]{\frac{\partial #1}{\partial #2}}
\DeclareMathOperator{\im}{im} 
\DeclareMathOperator{\sign}{sign}
\DeclareMathOperator{\e}{e} 
\DeclareMathOperator{\diag}{diag}
\DeclareMathOperator{\rank}{rank}
\DeclareMathOperator{\conv}{conv}
\DeclareMathOperator{\zero}{zero}
\DeclareMathOperator{\supp}{supp}
\DeclareMathOperator{\interior}{int}
\DeclareMathOperator{\relint}{relint}
\DeclareMathOperator{\ver}{ver}
\DeclareMathOperator*{\argmax}{arg\,max}
\DeclareMathOperator{\id}{Id}
\newcommand{\ones}{1}
\newcommand{\trans}{\mathsf{T}}
\newcommand{\nP}{n_{\!P}}
\newcommand{\dP}{d_{\!P}}
\newcommand{\basT}{G}
\newcommand{\xx}{\xi}
\newcommand{\xv}{\xi}
\newcommand{\la}{\lambda}
\newcommand{\map}{F}
\newcommand{\face}{f}
\newcommand{\mono}{\green{monomial}\xspace}
\newcommand{\expo}{\green{exponential moment}\xspace}
\definecolor{darkgreen}{rgb}{0.0,0.7,0.0}
\newcommand\green[1]{{\textcolor{black}{#1}}}
\newcommand\blfootnote[1]{%
  \begin{NoHyper}
  \renewcommand\thefootnote{}\footnote{#1}%
  \addtocounter{footnote}{-1}%
  \end{NoHyper}
}
\begin{document}

\title{Existence of a unique, nondegenerate solution \\ to parametrized systems \\ of generalized polynomial equations}

\author{Abhishek Deshpande, Stefan M\"uller\textsuperscript{\Letter}}

\maketitle

\begin{abstract}
We consider parametrized systems of generalized polynomial equations (with real exponents) 
in $n$ positive variables, involving $m$ monomials with positive parameters;
that is, $x\in\R^n_>$ such that ${A \, (c \circ x^B)=0}$ with coefficient matrix $A\in\R^{l \times m}$, exponent matrix $B\in\R^{n \times m}$, parameter vector $c\in\R^m_>$ (and componentwise product~$\circ$).

Our main result characterizes the existence of a unique, \green{nondegenerate} solution (up to an exponential manifold) for all parameters 
in terms of the relevant geometric objects of the polynomial system:
the {\em coefficient polytope} and the {\em monomial dependency subspace}.
Technically, we show that unique existence of a \green{nondegenerate} solution is equivalent to a composite (\mono-\expo) map being a \green{diffeomorphism},
and we characterize this property
using Hadamard's global inversion theorem.

Additionally, we provide sufficient conditions 
in terms of sign vectors of the geometric objects,
which represent a genuine multivariate generalization of Descartes' rule of signs for exactly one solution. 
Finally, we illustrate all objects and results in a concrete example.


\vspace{2ex}
\noindent
{\bf Keywords.} generalized polynomial equations, 
parameters,
unique existence,
nondegeneracy,
Hadamard's global inversion theorem, 
sign vectors,
multivariate Descartes' rule of signs.

\vspace{2ex}
\noindent
{\bf AMS subject classification.} 
12D10,
26C10,
51M20,
52C40
\end{abstract}


\blfootnote{
\scriptsize

\noindent
{\bf Abhishek Deshpande} \\
Center for Computational Natural Sciences and Bioinformatics, 
International Institute of Information Technology Hyderabad,
Hyderabad, Telangana 500032,
India \\[1ex]
{\bf Stefan~M\"uller} \\
Faculty of Mathematics, University of Vienna, Oskar-Morgenstern-Platz 1, 1090 Wien, Austria \\
\Letter \, {\tt st.mueller@univie.ac.at}
}


\section{Introduction} \label{sec:intro}

In this work, we study positive solutions to parametrized systems of generalized polynomial equations (with real exponents). In particular, we characterize the existence of a unique, \green{nondegenerate} solution (modulo an exponential manifold) for all parameters,
and we obtain a corresponding multivariate Descartes' rule of signs (for exactly one solution).
Formally, we consider the parametrized system of generalized polynomial equations 
\begin{equation} \label{eq:system}
A \left( c \circ x^B \right) = 0
\end{equation}
for $x \in \R^n_>$, where $A\in\R^{l \times m}$, $B\in\R^{n \times m}$, and $c \in \R^m_>$.
That is, $A$ is the \emph{coefficient} matrix, $B$ is the \emph{exponent} matrix, and $c$ is the \emph{parameter} vector. 
Explicitly,
there are $l$ equations
\[
\sum_{j=1}^m a_{ij} \, c_j \, x_1^{b_{1j}} \cdots x_n^{b_{nj}} = 0 , \quad i=1,\ldots,l ,
\]
for $n$ positive variables $x_i>0$, $i=1,\ldots,n$,
involving $m$ positive parameters $c_j>0$ and $m$ monomials $x_1^{b_{1j}} \cdots x_n^{b_{nj}}$, $j=1,\ldots,m$.
We obtain the compact form~\eqref{eq:system} as follows.
From the exponent matrix $B = (b^1,\ldots,b^m)$,
we define the monomials $x^{b^j} = x_1^{b_{1j}} \cdots x_n^{b_{nj}} \in \R_>$,
the vector of monomials $x^B \in \R^m_>$ via $(x^B)_j = x^{b^j}$,
and the vector of monomial terms $c \circ x^B \in \R^m_>$ using the componentwise product~$\circ$.

Equation~\eqref{eq:system} accommodates both fewnomial systems (without parameters), cf.~\cite{Khovanskii1991,sottile2011real},
and generalized mass-action systems (with parameters),
cf.~\cite{HornJackson1972,Horn1972,Feinberg1972} and \cite{MuellerRegensburger2012,MuellerRegensburger2014,Mueller2016,MuellerHofbauerRegensburger2019}.
In both fewnomial and reaction network theory,
fundamental problems include the identification of lower and upper bounds 
for the number of positive solutions,
in particular,
multivariate extensions of Descartes' rule of signs
and, as the most basic problems,
the characterization of existence and/or uniqueness of positive solutions. 

{\bf Novelty and significance.}
To clearly delineate the novelty and significance of our results, it is essential to position this work within the multi-dimensional landscape of {\em polynomial systems}. Prior literature classifies this domain along several independent dimensions:
\begin{itemize}
\item 
polynomials vs.\ generalized polynomials: integer vs.\ real exponents
\item 
equations vs.\ inequalities: $A\,( c \circ x^B ) = 0$ vs.\ 
$A\,( c \circ x^B ) \ge 0$
\item 
parametrized vs.\ unparametrized: $A\,( c \circ x^B ) = 0$ vs.\ $A \, x^B  = 0$
\item
homogeneous  vs.\ inhomogeneous: $A\,( c \circ x^B ) = 0$ vs.\ 
$A\,( c \circ x^B ) = z$
\item 
general vs.\ nondegenerate solutions
\item
upper vs.\ lower bounds,
uniqueness vs.\ existence; \\
for given vs.\ for all parameters
\end{itemize}

In this work, we study the ``classical'' case of {\em (homogeneous) parametrized} systems of {\em generalized} polynomial {\em equations}, Equation~\eqref{eq:system}, 
and characterize the {\em existence} of a {\em unique}, {\em nondegenerate} solution (modulo an exponential manifold) {\em for all parameters}.
We first elaborate on our results and then contrast them with prior literature along the dimensions listed above.

We build on recent results by M\"uller and Regensburger~\cite{MuellerRegensburger2023a,MuellerRegensburger2023b},
which allow us to rewrite parametrized polynomial systems in terms of geometric objects, such as the {\em coefficient polytope} and the {\em monomial dependency subspace}.
In particular,
positive solutions to parametrized systems of {\em polynomial} equations (and even inequalities)
are in bijection with solutions to {\em binomial} equations on the coefficient polytope (determined by the dependency subspace). 

As we show, the existence of a unique, nondegenerate solution for all parameters
is equivalent to a composite (\mono-\expo) map being a diffeomorphism.
By Hadamard's global inversion theorem,
this is further equivalent to local invertibility and properness of that map.
We characterize both properties
in terms of the coefficient polytope (and its associated subspace) and the dependency subspace,
and all criteria can be checked effectively.
Finally,
we provide sufficient conditions for unique existence
in terms of sign vectors of the geometric objects involved,
which can be referred to as a {\em genuine} multivariate Descartes' rule of signs
(for exactly one solution for all parameters).
In terms of the dimensions listed above, 
we address problem {\color{blue}(III)} in Table~\ref{tab:dim} below.
In contrast, the {\em first} multivariate Descartes' rules in the literature
addressed problems (i), (ii), (iii) -- at most/at least/exactly one solution -- to {\em inhomogeneous} polynomial equations, see~\cite{Mueller2016,MuellerHofbauerRegensburger2019} based on \cite{CraciunGarcia-PuenteSottile2010,MuellerRegensburger2012}.

\begin{table}[h] 
\begin{center}
\begin{tabular}{|l|l|l|}
\hline
& Inhomogeneous & Homogeneous \\ \hline
Uniqueness
& (i) $\forall c. \forall z. \exists_{\le1} x. \, A\,(c\circ x^B)=z$
& (I) $\forall c. \exists_{\le1} x. \, A\,(c\circ x^B)=0$
\\ \hline
Existence 
& (ii) $\forall c. \forall z. \exists x. \, A\,(c\circ x^B)=z$ 
& (II) $\forall c. \exists x. \, A\,(c\circ x^B)=0$ 
\\ \hline
Unique existence
& (iii) $\forall c. \forall z. \exists! x. \, A\,(c\circ x^B)=z$ 
& {\color{blue}(III) $\forall c. \exists! x. \, A\,(c\circ x^B)=0$}
\\ \hline
\end{tabular}
\end{center}
\caption{Schematic classification of {\em parametrized} systems of {\em (generalized)} polynomial {\em equations}.
This work addresses problem~(III).}
\label{tab:dim} 
\end{table}


Note that (equivalent or sufficient) conditions for the inhomogeneous problems (i), (ii), (iii)
are trivially sufficient for the homogeneous problems (I), (II), (III), but are too strong in general.
In Section~\ref{sec:example} we provide an example,
where the equivalent conditions for (iii) in~\cite{MuellerHofbauerRegensburger2019} are not fulfilled,
whereas our sufficient (sign vector) conditions for (III) do hold.

{\bf Prior literature.}
The injectivity of the generalized polynomial map $f(x) = A\,(c\circ x^B)$
guarantees the uniqueness of solutions to $f(x)=0$
and naturally leads to the study of the inhomogeneous system ${A\,(c\circ x^B) = z}$,
possibly subject to additional linear constraints.
For overviews on injectivity, see~\cite{BanajiPantea2016,FeliuMuellerRegensburger2019}.
In~\cite{Mueller2016}, M\"uller, Feliu, Regensburger, Conradi, Shiu, and Dickenstein 
recognized previous results on problems (i) and (ii) in Table~\ref{tab:dim} 
--- originally established in~\cite{CraciunGarcia-PuenteSottile2010,MuellerRegensburger2012} ---
as first partial multivariate generalizations of Descartes' rule.
Subsequently, M\"uller, Hofbauer, and Regensburger~\cite{MuellerHofbauerRegensburger2019} characterized problem~(iii), 
by applying Hadamard's global inversion theorem to a family of exponential maps,
rather than to the composite (\mono-\expo) map investigated in this work.

Recently, Checa and Feliu~\cite{ChecaFeliu2025} provided sufficient conditions for problem~(I) in Table~\ref{tab:dim}, regarding the uniqueness of positive solutions to the homogeneous system $A(c\circ x^B) = 0$, possibly subject to additional linear constraints. Problem~(II) has not yet been addressed in the literature,
while problem (III) -- for nondegenerate solutions -- is the subject of this work.

For fewnomials, 
that is, unparametrized systems $A \, x^B = 0$, 
Khovanskii established an {\em upper bound} on the number of positive solutions in terms of the number of variables and monomials~\cite{khovanskii1980class},
which was subsequently sharpened by Bihan, Sottile, and Rojas~\cite{bihan2007new,bihan2008sharpness}.
For specific classes of multivariate fewnomials (that can be reduced to univariate problems), 
Bihan, Dickenstein, and Forsg{\aa}rd~\cite{bihan2017descartes,bihan2021optimal} obtained a multivariate Descartes' rule of signs,
while Telek and Feliu~\cite{feliu2022generalizing,telek2023topological} provided upper bounds on the number of connected components of the complement of a single hypersurface.
Regarding {\em lower bounds}, Bihan, Dickenstein, and Giaroli~\cite{bihan2020sign,bihan2020lower} utilized degree theory and Viro's method to establish sufficient conditions for the existence of at least one positive solution.


\subsubsection*{Organization of the work}

In Section~\ref{sec:problem}, we define our problem: the existence of a unique, \green{nondegenerate} solution (modulo an exponential manifold) for all parameters.
To this end, we introduce the relevant geometric objects for polynomial systems (in particular, the coefficient polytope and the monomial dependency subspace)
and recall the main results of previous work in Theorem~\ref{thm:previous}.
Further, we address the \green{degeneracy} of solutions in Theorem~\ref{thm:iso}.
In Section~\ref{sec:reformulation}, we reformulate our problem in three steps:
we parametrize the coefficient polytope via an \expo map;
we introduce a composite (\mono-\expo) map whose \green{bijectivity and local invertibility} are equivalent to the existence of a unique, \green{nondegenerate} solution; 
and we outline the use of Hadamard's global inversion theorem. 
%
In Section~\ref{sec:main}, we present our main results.
We characterize local invertibility and properness of the composite map
(in terms of matrices and geometric objects, respectively),
see~Theorems~\ref{thm:local_invert_1}, \ref{thm:local_invert_2} and \ref{thm:proper_1}, \ref{thm:proper_2}.
Altogether, we provide equivalent conditions for the composite map being a \green{diffeomorphism},
see Theorem~\ref{thm:main}.
Moreover, we provide sufficient conditions,
which represent a multivariate generalization
of Descartes’ rule of sign,
see~Theorem~\ref{thm:suff}.
%
%
In Section~\ref{sec:example}, 
we illustrate all objects and results in a concrete example,
and in Section~\ref{sec:discussion},
we discuss how this work extends previous work on unique existence.



\subsubsection*{Notation}
For two vectors $x, y \in \R^n$,
we denote their scalar product by $x \cdot y \in \R$
and their component-wise (Hadamard) product by $x \circ y \in \R^n$. We write $\id_n \in \R^{n \times n}$ for the identity matrix and $\ones_n \in \R^n$ for the vector with all entries equal to one.

We denote the positive real numbers by $\R_>$ and the nonnegative real numbers by $\R_\ge$. For $x \in \R^n_>$ and $y \in \R^n$, we introduce the monomial $x^y \in \R_>$ as $x^y = \prod_{i=1}^n (x_i)^{y_i}$.
Further, for $Y = (y^1,\ldots,y^m) \in \R^{n \times m}$,
we introduce the vector of monomials $x^Y \in \R^m_>$ via $(x^Y)_j = x^{(y^j)}$.
For $x \in \R^n$, we define $\e^x \in \R^n_>$ componentwise, that is, $(\e^x)_i=\e^{x_i}$,
and for $x \in \R^n_>$, we define $\ln x \in \R^n$ analogously.

For a vector $x \in \R^n$, we obtain its sign vector $\sign(x) \in \{-,0,+\}^n$ by applying the sign function componentwise.
For a subset $S \subseteq \R^n$,
$
\sign(S) = \{ \sign(x) \mid x \in S \} \subseteq \{-,0,+\}^n .
$


\section{Problem definition} \label{sec:problem}

We consider the parametrized system of generalized polynomial equations~\eqref{eq:system}.
For simplicity, we do not partition the system into {\em classes}~\cite{MuellerRegensburger2023a}, but treat it as a single class.
In order to state Theorem~\ref{thm:previous} below, 
we introduce geometric objects and auxiliary matrices as defined in~\cite{MuellerRegensburger2023a,MuellerRegensburger2023b}.

\begin{enumerate}[(i)]

\item 
Let $C = \ker A \cap \R_>^m$ denote the \emph{coefficient cone}. Its closure $\overline{C} = \ker A \cap \R_\ge^m $ is a polyhedral cone, called an s-cone (subspace cone) in~\cite{MuellerRegensburger2016}.
We assume $C \neq \emptyset$ in the following,
as a necessary condition for the existence of solutions to equations~\eqref{eq:system}.

Further, let $\Delta = \{y \in \R^m_\ge \mid v \cdot y =1 \}$ for some $v\in\rm{relint}(C^*)$, where $C^*$ is the dual cone of $C$. For simplicity, we choose $v=\ones_m$. Then, $\Delta$ is the standard simplex in~$\R^m$.

Now, $P = C \cap \Delta$ denotes the \emph{coefficient polytope}.
(Strictly speaking, only its closure $\overline{P}$ is a polytope.)
Note that the coefficient polytope $P$ has dimension one less than the dimension of the coefficient cone $C$.

\item 
Let
\[
\mathcal{B} = \begin{pmatrix}
B \\
\ones_m^\trans
\end{pmatrix}
\quad \text{and} \quad
D = \ker \mathcal{B} ,
\]
using one ``Cayley'' row of 1's 
(again because we assume one class).
$D$ is called the \emph{(monomial) dependency subspace}.
(It records linear dependencies between the columns of $B$,
which define the monomials.)

Analogously, let
\[
\mathcal{A} = \begin{pmatrix}
A \\
\ones_m^\trans 
\end{pmatrix}
\quad \text{and} \quad
T = \ker \mathcal{A} .
\]

Further,
let $\basT \in \R^{m \times \dim T}$ and $H \in \R^{m \times \dim D}$ be basis matrices such that $T = \im \basT$ and $D = \im H$.
Note that $\basT$ and $H$ have zero column sums.

\item 
Let 
$I = \begin{pmatrix} \id_{m-1} \\ -\ones_{m-1}^\trans \end{pmatrix} \in \R^{m \times (m-1)}$ 
denote the ``incidence matrix'' 
and 
\[ M = B \, I \in\R^{n\times(m-1)} \] 
be the resulting monomial difference matrix. 
(The latter is generated by taking the differences between the first $m-1$ columns of $B$ and its last column.)
Now, let \[ L = \im M \] denote the \emph{(monomial) difference subspace}
and 
$d = \dim(\ker M)$ denote the \emph{(monomial) dependency}.
By~\cite[Proposition 1 and Lemma 4]{MuellerRegensburger2023a},
\[
d = \dim D = m - 1 - \dim L .
\]

\item 
Finally, let $E = I M^* \in \R^{m \times n}$, where $I$ is the incidence matrix and $M^* \in \R^{(m-1) \times n}$ is a generalized inverse of $M$.

\end{enumerate}

We can now summarize the main results of our previous work~\cite{MuellerRegensburger2023a} for generalized polynomial inequalities (restricted to equations, cf.~\cite{MuellerRegensburger2023b}).

\begin{theorem}[\cite{MuellerRegensburger2023a}, Theorem 5 and Proposition 6] \label{thm:previous}
Consider the parametrized system of generalized polynomial equations $A \, (c \circ x^B) = 0$. The solution set $Z_c = \{ x \in \R^n_> \mid A \, (c \circ x^B) = 0 \}$ can be written as
\[
Z_c = \{ (y\, \circ\, c^{-1})^E \mid y \in Y_c \} \circ \e^{L^\perp} ,
\]
where
\[
Y_c = \{ y \in P \mid y^z = c^z \text{ for all } z\in D \} 
\]
is the solution set on the coefficient polytope $P$.
Moreover, there is a bijection between $Z_c / \e^{L^\perp}$ and $Y_c$.
\end{theorem}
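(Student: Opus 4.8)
The plan is to pass to logarithms, reduce the equation to a statement about subspaces, and then reassemble $Z_c$ from a ``shape'' part living on the coefficient polytope and a ``scaling/fiber'' part.

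First I would note that $w = c \circ x^B \in \R^m_>$ automatically, so $A(c \circ x^B) = 0$ is equivalent to $w \in \ker A \cap \R^m_> = C$; hence $x \in Z_c$ if and only if $c \circ x^B \in C$. Taking componentwise logarithms gives $\ln w = \ln c + B^\trans \ln x$, so as $x$ ranges over $\R^n_>$ the achievable $\ln w$ fill the affine subspace $\ln c + \im B^\trans$.

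Next I would normalize onto the simplex. For $x \in Z_c$, set $\lambda = \ones_m \cdot w > 0$ and $y = w/\lambda$; then $y \in C \cap \Delta = P$ and $\ln(y \circ c^{-1}) = B^\trans \ln x - (\ln \lambda)\,\ones_m \in \im B^\trans + \R\ones_m$. Since $\im B^\trans + \R\ones_m = (\ker \mathcal{B})^\perp = D^\perp$, and the condition ``$y^z = c^z$ for all $z \in D$'' is exactly $\ln(y \circ c^{-1}) \perp D$, this shows $y \in Y_c$; conversely every $y \in Y_c$ yields $\ln(y \circ c^{-1}) = B^\trans \xi + \mu\,\ones_m$ for some $\xi \in \R^n$, $\mu \in \R$. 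For the fiber I would use that $L = \im M$ is spanned by the monomial differences $b^k - b^m$, so $L^\perp = \{v \in \R^n \mid B^\trans v \in \R\ones_m\}$ is precisely the set of logarithmic directions that leave the normalized monomial vector fixed. Consequently two solutions share the same $y$ exactly when they differ by a factor $\e^v$ with $v \in L^\perp$, and $x_0 \circ \e^{L^\perp} \subseteq Z_c$ whenever $x_0 \in Z_c$; since $y$ is determined by $x$, the cosets over distinct $y$ are disjoint and cover $Z_c$, giving the inclusion $Z_c \subseteq$ (right-hand side).

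The step I expect to be the main obstacle is exhibiting the explicit particular solution $x_0 = (y \circ c^{-1})^E$ and checking $x_0 \in Z_c$ (which yields the reverse inclusion, i.e. that every $y \in Y_c$ is realized), the delicacy being that $E = I M^*$ uses only a generalized inverse $M^*$, so $M M^*$ is merely a projection onto $L$ rather than the identity. Writing $p = \ln(y \circ c^{-1})$, it suffices to prove $B^\trans \ln x_0 - p \in \R\ones_m$, i.e. (since $\ker I^\trans = \R\ones_m$) that $I^\trans(B^\trans \ln x_0 - p) = 0$. Three identities combine here: $I^\trans \ones_m = 0$ discards the $\mu\,\ones_m$ term, $I^\trans B^\trans = (BI)^\trans = M^\trans$ converts incidence differences into $M$, and $M M^* M = M$ collapses the resulting expression back to $M^\trans \xi = I^\trans p$. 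This gives $c \circ x_0^B = \lambda y$ with $\lambda > 0$, whence $A(c \circ x_0^B) = \lambda\, A y = 0$ because $y \in P \subseteq C$. Assembling the particular solution $x_0$ with the fiber $\e^{L^\perp}$ over each $y \in Y_c$ then yields the stated description of $Z_c$.
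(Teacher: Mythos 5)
Your proof is correct. Note that the paper itself offers no proof of this statement --- it is imported verbatim as Theorem~1 of the cited reference \cite{MuellerRegensburger2023b} --- so there is nothing in this document to compare against; but your logarithmic argument (reducing $A(c\circ x^B)=0$ to $c\circ x^B\in C$, normalizing onto $\Delta$ to land in $Y_c$ via $\im\mathcal{B}^\trans=D^\perp$, identifying the fiber as $\e^{L^\perp}$, and verifying the representative $(y\circ c^{-1})^E$ through $I^\trans\ones_m=0$, $I^\trans B^\trans=M^\trans$, and $MM^*M=M$) is a complete and self-contained derivation that uses exactly the objects set up in Section~\ref{sec:problem}, and in particular correctly handles the only delicate point, namely that $M^*$ is merely a generalized inverse.
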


Theorem~\ref{thm:previous} can be read as follows: 
In order to determine the solution set $Z_c$ (to {\em polynomial equations} on the positive orthant), 
first determine the solution set $Y_c$ (to {\em binomial equations} on the coefficient polytope).
Recall that the coefficient polytope $P$ is determined by the coefficient matrix $A$,
and the dependency subspace $D$ is determined by the exponent matrix $B$.

To a solution $y \in Y_c$,
there corresponds a representative solution $x = (y \circ c^{-1})^E \in Z_c$. 
In fact, if (and only if) $\dim L < n$, 
then $y \in Y_c$ corresponds to an exponential manifold of solutions, $x \circ \e^{L^\perp} \subseteq Z_c$.
Strictly speaking, existence of a unique solution corresponds to $|Y_c|=1$ and $\dim L = n$
(that is, $L^\perp = \{0\}$).

\green{{\bf Problem definition.}
In this work, we characterize when $|Z_c / \e^{L^\perp}| = 1$ and all solutions are nondegenerate (have injective Jacobian matrices) for all parameters~$c$.
In fact, we characterize when $|Y_c|=1$ and all solutions (on the coefficient polytope) are nondegenerate for all parameters $c$.
This is equivalent,
since $y \in Y_c$ and a corresponding solution $x \in Z_c$ are nondegenerate at the same time, see Theorem~\ref{thm:iso} below.}

First, note that, to any $x$ in the positive orthant (not necessarily a solution),
there corresponds a vector $\bar y=c \circ x^B$ in the coefficient cone.
This, in turn, defines a proportional vector $y$ in the coefficient polytope
(with $y = \alpha \, \bar y$ and $\alpha>0$),
which can be uniquely represented by a parameter~$\xi$ such that
\[
c \circ x^B = \underbrace{\bar y}_{\in C} \sim \underbrace{y}_{\in P} = p(\xi) \quad \text{(e.g. } p(\xi) = y^* + \basT \xi ).
\]
In the last step, 
a parametrization $p$ of the coefficient polytope $P$ is specified. 
In the example above, $p$ is a linear map with base point $y^* \in P$ and basis matrix~$\basT$ for $\ker \mathcal{A}$, the linear subspace associated with the affine hull of the polytope.
For the moment, we just assume $p$ to be an injective immersion
(that is, $p$ is injective and its Jacobian matrix $J_p$ has full column rank).

Second, $y^z = c^z$ for all $z\in D$ can be written as $y^H-c^H=0$ with a basis matrix~$H$ for $D = \ker \mathcal{B}$, the dependency subspace.

Third,
the 
map
$P \to \R^n_>, \, y \mapsto (y \circ c^{-1})^E$,
assigns to a solution $y$ in the polytope 
a representative solution $x$. 

Now, we can state the result regarding nondegeneracy announced above.
For a corresponding result in the context of reaction networks,
see~\cite[Theorem~3.14(iii)]{BF2026}.

\begin{theorem} \label{thm:iso}
For fixed parameters $c$, consider the maps
\[
f(x) = A \left( c \circ x^B \right) , \quad
g(y) = y^H - c^H , \quad
\phi(y) = \left( y \circ c^{-1} \right)^E ,
\]
with $x \in \R^n$ and $y \in P$.
Let $p$ be a parametrization of the coefficient polytope~$P$
that is an injective immersion (i.e., $p$ is injective and its Jacobian~$J_{p}$ has full column rank),
and define the composite maps
\[
\bar g(\xi) = g(p(\xi)) = p(\xi)^H - c^H , \quad 
\bar \phi(\xi) = \phi(p(\xi)) = \left( p(\xi) \circ c^{-1} \right)^E .
\]
The associated Jacobian matrices, evaluated at $x$ and $\xi$, respectively, are given by
\begin{gather*}
J_f = A \diag\left( c \circ x^B \right) B^\trans \diag(x^{-1}), \quad
J_{\bar g} = \diag\left( p(\xi)^H \right) H^\trans \diag\left( p(\xi)^{-1} \right) J_p , \\
J_{\bar \phi} = \diag\left( \bar\phi(\xi) \right) E^\trans \diag\left( p(\xi)^{-1} \right) J_p .
\end{gather*}
For corresponding solutions $x$ and $\xi$,
that is, $f(x)=0$, $\bar g(\xi)=0$, and $x=\bar\phi(\xi)$,
the map $J_{\bar\phi}$ induces the isomorphism
\[
\ker J_{f} / \diag(x) L^\perp 
\cong 
\ker J_{\bar g} .
\]
\end{theorem}

\begin{proof}
Using $\bar y = c \circ x^B$ and $y=p(\xi)$ as above,
we first simplify the notation for the Jacobian matrices,
\begin{gather*}
J_f = A \diag(\bar y) B^\trans \! \diag(x^{-1}) , \quad
J_{\bar g} = \diag( y^H ) H^\trans \! \diag( y^{-1} ) J_p , \\
J_{\bar\phi} = \diag(x) E^\trans \! \diag( y^{-1} ) J_p .
\end{gather*}
Using the ``incidence matrix'' $I$ (with $\ker I = \{0\}$ and $\ker I^\trans = \im \ones_m$) and the monomial difference matrix $M = B I$,
both defined in (iii) at the beginning of this section,
we have the equivalences
\begin{align*}
J_{\bar g} \, w = 0
& \iff \diag( y^H ) H^\trans \! \diag( y^{-1} ) J_p \, w = 0 \\
& \iff \diag( y^{-1} ) J_p \, w \in \ker H^\trans = \im \mathcal B^\trans \\
& \iff \exists u, \la \colon \diag( y^{-1} ) J_p \, w = \textstyle \begin{pmatrix} B^\trans & \ones_m \end{pmatrix} \binom{u}{\la} \\
& \iff \exists u \colon I^\trans \diag( y^{-1} ) J_p \, w = M^\trans u \\ 
& \iff \exists v \colon I^\trans \diag( y^{-1} ) J_p \, w = M^\trans \diag(x^{-1}) v 
\end{align*}
and
\begin{align*}
J_f \, v = 0 
& \iff A \diag(\bar y) B^\trans \! \diag(x^{-1}) v = 0 \\
& \iff  B^\trans \! \diag(x^{-1}) v \in \ker \left( A \diag(\bar y) \right) \\
& \overset{\text{Lemma~\ref{lem:iso}}}{\iff}  B^\trans \! \diag(x^{-1}) v \in \diag(y^{-1}) \im J_p + \im \ones_m \\
& \iff \exists w, \la \colon B^\trans \! \diag(x^{-1}) v = \diag(y^{-1}) J_p \, w + \ones_m \la \\
& \iff \exists w \colon M^\trans \diag(x^{-1}) v = I^\trans \diag( y^{-1} ) J_p \, w . 
\end{align*}
The relation
\begin{equation} \label{eq:link} \tag{$\star$}
\underbrace{I^\trans \diag( y^{-1} ) J_p}_{Q_{\bar g}} w 
= 
\underbrace{M^\trans \diag(x^{-1})}_{Q_f} v ,
\end{equation}
appearing at the end of both chains of equivalences,
implies that the images of the two kernels (of the respective Jacobian matrices) coincide,
\[
Q_{\bar g} (\ker J_{\bar g}) = Q_f (\ker J_f) .
\]
By the homomorphism theorem for vector spaces (that is, $R(S) \cong S/(S\cap\ker R)$ for a matrix $R$ and a vector space $S$),
\[
Q_f (\ker J_f)
\cong
\ker J_f / (\ker J_f \cap \ker Q_f) 
, \quad
Q_{\bar g} (\ker J_{\bar g})
\cong
\ker J_{\bar g} / (\ker J_{\bar g} \cap \ker Q_{\bar g}) 
\]
and hence
\[
\ker J_f / (\ker J_f \cap \ker Q_f) 
\cong
\ker J_{\bar g} / (\ker J_{\bar g} \cap \ker Q_{\bar g}) .
\]
It remains to simplify the isomorphism.

First, $\ker Q_f \subseteq \ker J_f$ (and hence $\ker J_f \cap \ker Q_f = \ker Q_f$).
This can be seen follows:
if $Q_f \, v = 0$, then the relation~\eqref{eq:link} holds for $w=0$,
that is, the last line in the equivalences for $J_f \, v = 0$ holds.

Second, 
$\ker Q_f = \diag(x) \ker M^\trans = \diag(x) L^\perp$
since $\ker M^\trans = (\im M)^\perp = L^\perp$.

Third, $\ker Q_{\bar g} = \{0\}$ (and hence $\ker J_{\bar g} \cap \ker Q_{\bar g} = \{0\}$).
This can be seen follows:
by Lemma~\ref{lem:Jp}, $\ker J_p = \{0\}$ and $\im J_p = \ker \mathcal{A} \subseteq \ker \ones_m^\trans$.
However, $\ker I^\trans = \im \ones_m$
and hence $\ker (I^\trans \diag(y^{-1})) \cap \im J_p = \{0\}$.

Finally, after multiplying \eqref{eq:link} with a generalized inverse $(M^*)^\trans$ and $\diag(x)$ from the left,
we have $\diag(x) (M^*)^\trans I^\trans \diag( y^{-1} ) J_p \, w = J_{\bar \phi} \, w = v$.
\end{proof}

In the proof of Theorem~\ref{thm:iso}, we have used the following results.

\begin{lemma} \label{lem:Jp}
Let $U\subset \R^m$ and $p \colon U \to P$ be a parametrization of the coefficient polytope $P$
that is an injective immersion.
Then, for all $\xx \in U$, $\ker J_p(\xx) = \{0\}$ and $\im J_p(\xx) = \ker \mathcal{A}$.
\end{lemma}
\begin{proof}
Since the map $p$ is an injective immersion, 
$\ker J_p = \{0\}$.
Further,
$\im J_p \subseteq \ker \mathcal{A}$,
that is,
the tangent space of the parametrization is contained in the subspace associated with the affine hull of the polytope.
By the rank-nullity theorem for $J_p \in \R^{m \times \dim P}$, $\dim(\im J_p) = \dim P$.
Since $\im J_p \subseteq \ker \mathcal{A}$ and $\dim(\im J_p) = \dim P = \dim (\ker \mathcal A)$,
we get $\im J_p = \ker \mathcal A$.
\end{proof}

\begin{lemma} \label{lem:iso}
Let $A \in \R^{l \times m}$ be the coefficient matrix.
Further, let $\bar y \in \R^m_>$ such that $A \bar y = 0$.
Finally, let $p$ be a parametrization of the coefficient polytope $P$
that is an injective immersion.
Then, $\ker A = \im J_p + \im \bar y$, that is,
\[ \ker \left( A \diag(\bar y) \right) 
= \im \left( \diag(\bar y^{-1}) J_p \right)
+ \im \ones_m . \]
\end{lemma}

\begin{proof}
Since $A \bar y = 0$ with $\bar y \in \R^m_>$, the vector $\ones_m$ is not in the row span of~$A$.
In particular, for $\mathcal{A} = \binom{A}{\ones_m^\trans}$,
$\rank \mathcal A = \rank A + 1$.
By the rank-nullity theorem (for $A$ and $\mathcal{A}$), $\dim (\ker A) = \dim (\ker \mathcal{A}) + 1$.
Indeed, $\ker A = \ker \mathcal{A} + \im \bar y$,
since $\bar y \in \ker A$, but $\bar y \notin \ker \mathcal{A}$.
By Lemma~\ref{lem:Jp}, $\im J_p = \ker \mathcal{A}$.
Hence, $\ker A = \im J_p + \im \bar y$.
The displayed equation follows by multiplying with $\diag(\bar y^{-1})$ from the left.
\end{proof}


\section{Problem reformulation} \label{sec:reformulation} 

We reformulate the problem in three steps: 
(i) we parametrize the coefficient polytope via an \expo map;
\green{(ii) we define a map~$\map$ that is a diffeomorphism exactly when $|Y_c|=1$ and $y\in Y_c$ is nondegenerate for all~$c$;}
and (iii) we outline how we are going to apply our main technical tool, Hadamard's global inversion theorem, to the map.


\subsection{A parametrization of the coefficient polytope}

From toric geometry~\cite{Fulton1993}, we have the following result.

\begin{proposition}[\cite{Fulton1993}, page 83] \label{pro:moment}
Let $U = (u^1, u^2,\ldots, u^{r}) \in \R^{n \times r}$ 
and let the polytope $P = \conv(U)$ 
have full dimension~$n$.
Further, let $\varepsilon \in \R^r_>$. 
Then, 
\[
\pi \colon \R^n \to \interior{P}, \quad
x \mapsto \frac{ \sum_{k=1}^{r} \varepsilon_k \e^{u^k \cdot x} u^k }{ \sum_{k=1}^{r} \varepsilon_k \e^{u^k \cdot x}}
\]
is a real analytic isomorphism.
\end{proposition}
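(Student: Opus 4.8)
The plan is to recognize $\pi$ as the gradient of a strictly convex real analytic function and then combine the inverse function theorem with a coercivity (minimization) argument for surjectivity. Concretely, set
\[
F(x) = \ln\left( \sum_{k=1}^{r} \varepsilon_k \, \e^{u^k \cdot x} \right),
\]
the log-sum-exp (log-partition) function. A direct computation gives $\nabla F(x) = \pi(x)$, so $\pi$ is the gradient map of $F$. Since the denominator $\sum_k \varepsilon_k \e^{u^k \cdot x}$ is a sum of strictly positive terms, $F$ and hence $\pi$ are real analytic on all of $\R^n$. This is the classical statement that the gradient of the log-partition function of an exponential family is a Legendre isomorphism onto the interior of the convex hull of the sufficient statistics.

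First I would establish injectivity and local invertibility. Writing $\lambda_k(x) = \varepsilon_k \e^{u^k \cdot x} / \sum_j \varepsilon_j \e^{u^j \cdot x} > 0$, so that $\sum_k \lambda_k(x) = 1$, the Hessian of $F$ is the covariance matrix
\[
\nabla^2 F(x) = \sum_k \lambda_k(x)\, u^k (u^k)^\trans - \pi(x)\,\pi(x)^\trans ,
\]
which is positive semidefinite, and positive definite precisely when the $u^k$ affinely span $\R^n$, i.e.\ exactly when $\dim P = n$. Hence $\nabla^2 F$ is everywhere nonsingular, so by the analytic inverse function theorem $\pi$ is a local analytic isomorphism, and strict convexity of $F$ forces $\pi = \nabla F$ to be injective.

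Next I would pin down the image. Each value $\pi(x) = \sum_k \lambda_k(x)\, u^k$ is a convex combination with strictly positive weights, hence lies in $\relint P = \interior P$; thus $\pi(\R^n) \subseteq \interior P$. For surjectivity, fix $p \in \interior P$ and consider $G(x) = F(x) - p \cdot x$, which is strictly convex with $\nabla G = \pi - p$, so a critical point of $G$ is exactly a preimage of $p$. The key point is coercivity: since $\sum_k \varepsilon_k \e^{u^k \cdot x} \ge (\min_k \varepsilon_k)\, \e^{\max_k (u^k \cdot x)}$, we get $G(x) \ge \mathrm{const} + \big(\max_k (u^k \cdot \tfrac{x}{\|x\|}) - p \cdot \tfrac{x}{\|x\|}\big)\,\|x\|$. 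Because $p$ is interior, $p \cdot w < \max_{y \in P} y \cdot w = \max_k (u^k \cdot w)$ for every $w \neq 0$, and this gap is bounded below by some $\delta > 0$ on the unit sphere by compactness. Hence $G(x) \to +\infty$ as $\|x\| \to \infty$, so $G$ attains a global minimum at some $x^\ast$ with $\pi(x^\ast) = p$, giving $\pi(\R^n) = \interior P$.

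Combining injectivity, surjectivity, local analytic invertibility, and the analytic inverse function theorem (so that the inverse is itself real analytic), $\pi$ is a real analytic isomorphism onto $\interior P$. I expect the coercivity step to be the main obstacle, since it is where full-dimensionality ($\dim P = n$) and the interiority of the target $p$ are genuinely used, through the strict inequality $p \cdot w < \max_k (u^k \cdot w)$. Everything else — analyticity, the Hessian-as-covariance computation, and injectivity from strict convexity — is routine once this variational, convex-analytic viewpoint is adopted.
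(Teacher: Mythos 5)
Your proof is correct. Note, however, that the paper does not prove this proposition at all: it is quoted verbatim from Fulton's book on toric varieties (page 83) and used as a black box; the only argument the paper supplies in this vicinity is the reduction of the non-full-dimensional case (Proposition~\ref{pro:moment_extended}) to this one via an affine change of coordinates. What you have written is a self-contained proof of the cited result, and it follows the standard convex-analytic route: $\pi=\nabla F$ for the log-partition function $F$, the Hessian is the covariance matrix $\sum_k\lambda_k u^k(u^k)^{\mathsf T}-\pi\pi^{\mathsf T}$, which is positive definite exactly because $\dim P=n$ (so the differences $u^k-u^j$ span $\R^n$), strict convexity gives injectivity, positivity of the weights $\lambda_k$ places the image in $\relint P=\interior P$, and surjectivity follows by minimizing the coercive function $G(x)=F(x)-p\cdot x$, where coercivity uses precisely the strict inequality $p\cdot w<\max_k(u^k\cdot w)$ valid for interior $p$ and the compactness of the unit sphere. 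All steps check out, including the analytic inverse function theorem to get analyticity of $\pi^{-1}$. This is essentially the argument in the cited reference, so you have filled in a proof the authors deliberately omitted rather than diverged from one they gave.
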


We call $\pi$ an {\em \expo map} to distinguish it from an algebraic moment map,
where the exponential weights $\e^{u^k \cdot x}$ are replaced by monomial weights $x^{u^k}$ (and $x \in \R^n$ is replaced by $x \in \R^n_>$).

We extend the result to polytopes that do not have full dimension.

\begin{proposition} \label{pro:moment_extended}
Let $U = (u^1, u^2,\ldots, u^{r}) \in \R^{n \times r}$ 
and let the polytope $P = \conv(U)$
have dimension~$d \le n$.
In particular, $P$ lies in an affine subspace with associated linear subspace $S \subseteq \R^n$ of dimension $d$.
Let $S = \im Q$ with $Q \in \R^{n \times d}$ and $Q^\trans Q = \id$.
Finally, let $\varepsilon \in \R^r_>$. 
Then, 
\[
\pi \colon \R^d \to \relint{P}, \quad
x \mapsto \frac{ \sum_{k=1}^{r} \varepsilon_k \e^{u^k \cdot Q x} u^k }{ \sum_{k=1}^{r} \varepsilon_k \e^{u^k \cdot Q x}}
\]
is a real analytic isomorphism.
\end{proposition}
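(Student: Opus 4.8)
The plan is to reduce the statement to the full-dimensional case of Proposition~\ref{pro:moment} by ``flattening'' the polytope $P$ into $\R^d$ along the affine subspace that contains it. Fix a base point $p_0$ in the affine hull of $P$. Since each $u^k - p_0$ lies in $S = \im A$ and $A$ has full column rank, every vertex can be written uniquely as $u^k = p_0 + A w^k$ with $w^k \in \R^d$; because $A^\trans A = \id$, these coordinates are given explicitly by $w^k = A^\trans(u^k - p_0)$. The map $\phi \colon \R^d \to \R^n$, $w \mapsto p_0 + A w$, is an affine isomorphism onto the affine hull of $P$, and it carries the flattened polytope $Q = \conv(w^1, \ldots, w^r) \subseteq \R^d$ onto $P$. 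As $P$ has dimension $d$ and $\phi$ preserves dimension, $Q$ is full-dimensional in $\R^d$.

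First I would apply Proposition~\ref{pro:moment} to $Q$ with the same weights $\varepsilon \in \R^r_>$, obtaining that the map
\[
\tilde\pi \colon \R^d \to \interior Q, \quad
x \mapsto \frac{\sum_{k=1}^r \varepsilon_k \e^{w^k \cdot x} w^k}{\sum_{k=1}^r \varepsilon_k \e^{w^k \cdot x}}
\]
is a real analytic isomorphism.

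The crux is an algebraic identity relating $\pi$ to $\tilde\pi$. Using $u^k = p_0 + A w^k$ together with the orthonormality $A^\trans A = \id$, one computes $u^k \cdot A x = p_0 \cdot A x + (A w^k) \cdot (A x) = p_0 \cdot A x + w^k \cdot x$, so that $\e^{u^k \cdot A x} = \e^{p_0 \cdot A x}\, \e^{w^k \cdot x}$. The factor $\e^{p_0 \cdot A x}$ is independent of $k$ and hence cancels between numerator and denominator of $\pi(x)$. Substituting $u^k = p_0 + A w^k$ in the (reduced) numerator then yields
\[
\pi(x) = p_0 + A\, \tilde\pi(x) = (\phi \circ \tilde\pi)(x) ,
\]
so that $\pi$ factors as the composition of the real analytic isomorphism $\tilde\pi$ with the affine isomorphism $\phi$.

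To conclude, I would note that $\phi$ maps $\interior Q$ (which, $Q$ being full-dimensional, coincides with its relative interior) onto $\relint P$, since affine isomorphisms preserve relative interiors. Hence $\pi = \phi \circ \tilde\pi$ is a real analytic isomorphism from $\R^d$ onto $\relint P$, as claimed. I do not anticipate a serious obstacle: the only point requiring care is the cancellation of $\e^{p_0 \cdot A x}$, which rests precisely on the hypothesis $A^\trans A = \id$ (ensuring $(A w^k) \cdot (A x) = w^k \cdot x$). Even without orthonormality one would merely reparametrize the domain by the invertible Gram matrix $A^\trans A$, so the conclusion would persist; the assumption is a normalization that keeps the identity in its cleanest form.
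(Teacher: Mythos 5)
Your proof is correct and follows essentially the same route as the paper's: both reduce to the full-dimensional case of Proposition~\ref{pro:moment} by flattening $P$ into $\R^d$ via the affine isomorphism $w \mapsto p_0 + Aw$ (the paper calls it $a$ with base point $u^0$), apply the full-dimensional moment map there, and verify that the $k$-independent exponential factor cancels so that $\pi$ factors as the affine isomorphism composed with the flattened moment map. The only cosmetic difference is that you compute $u^k \cdot Ax = p_0 \cdot Ax + w^k \cdot x$ directly from $A^\trans A = \id$, whereas the paper reads the same identity through the adjoint relation $A^\trans(u^k-u^0)\cdot x = (u^k-u^0)\cdot Ax$.
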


\begin{proof}
The polytope $P$ lies in an affine subspace $u^0 + S$ with $u^0 \in \R^n$.
We introduce the affine bijection $a \colon \R^d \to u^0 + S$, $x \mapsto Q x + u^0$
with inverse $a^{-1} \colon u^0 + S \to \R^d$, $u \mapsto Q^\trans(u-u^0$).
(Note that $Q$ and $Q^\trans$ are generalized inverses of each other.) 

Let $\bar U = (\bar u^1, \bar u^2,..., \bar u^{r}) \in \R^{d \times r}$ with 
$\bar u^k = a^{-1}(u^k) = Q^\trans (u^k - u^0)$
(and hence $Q \, \bar u^k = u^k - u^0$)
and $\bar P = \conv(\bar U)$. Then $\bar P$ has dimension $d$. 
By Proposition~\ref{pro:moment}, 
\[
\bar \pi \colon \R^d \to \interior{\bar P}, \quad x \mapsto 
\frac{ \sum_{k=1}^{r} \varepsilon_k \e^{\bar u^k \cdot x} \bar u^k }{ \sum_{k=1}^{r} \varepsilon_k \e^{\bar u^k \cdot x}} 
\]
is a real analytic isomorphism. 
Now,
\begin{align*}
a(\bar \pi(x)) &= Q \, \frac{ \sum_{k=1}^{r} \varepsilon_k \e^{\bar u^k \cdot x} \bar u^k }{ \sum_{k=1}^{r} \varepsilon_k \e^{\bar u^k \cdot x}} + u^0 \\
&= \frac{ \sum_{k=1}^{r} \varepsilon_k \e^{Q^\trans(u^k-u^0) \cdot x} \, (u^k-u^0) }{ \sum_{k=1}^{r} \varepsilon_k \e^{Q^\trans(u^k-u^0) \cdot x}} + u^0 \\
&= \frac{ \sum_{k=1}^{r} \varepsilon_k \e^{(u^k-u^0) \cdot Q x} \, (u^k-u^0) }{ \sum_{k=1}^{r} \varepsilon_k \e^{(u^k-u^0) \cdot Q x}} + u^0 \\
&= \frac{ \sum_{k=1}^{r} \varepsilon_k \e^{u^k \cdot Q x} \, (u^k-u^0) }{ \sum_{k=1}^{r} \varepsilon_k \e^{u^k \cdot Q x}} + u^0 \\
&= \frac{ \sum_{k=1}^{r} \varepsilon_k \e^{u^k \cdot Q x} u^k }{ \sum_{k=1}^{r} \varepsilon_k \e^{u^k \cdot Q x}} \\
&= \pi(x) .
\end{align*}
Since both $a$ and $\bar \pi$ are real analytic isomorphisms, also the composition $\pi = a \circ \bar \pi$ is a real analytic isomorphism.
\end{proof}

The coefficient polytope $P$  with dimension $\dP$ lies in an affine subspace with associated linear subspace $T = \ker \mathcal{A} = \im \basT$ with dimension $\dP$.
Now,
let $\overline P$ have vertices $v^1, v^2, \ldots, v^{\nP} \in \R^m$,
in particular,  
$\overline P = \conv(V)$ with $V = (v^1, v^2, \ldots, v^{\nP}) \in \R^{m \times \nP}$.
By Proposition~\ref{pro:moment_extended} (with $U=V$, $\varepsilon = \ones_{\nP}$, and $Q=\basT$ assumed to be orthonormal),
$P$~can be parametrized by the real analytic isomorphism
\begin{equation} \label{eq:param}
p \colon \R^{\dP} \to P, \quad
\xx \mapsto \frac{ \sum_{k=1}^{\nP} \e^{v^k \cdot \basT \xx} v^k }{ \sum_{k=1}^{\nP} \e^{v^k \cdot \basT \xx}} .
\end{equation}


\subsection{The composite map \texorpdfstring{$\map$}{pdf}}

Recall $D = \ker \mathcal{B} = \im H$ with $H \in \R^{m \times d}$.
The solution set on the coefficient polytope can be written as
\[
Y_c = \{ y \in P \mid y^z = c^z \text{ for all } z \in D \} 
= \{ y \in P \mid y^H = c^H \} ,
\]
which involves the monomial map
\[
h \colon \R^m_> \to \R^d_>, \quad
y \mapsto y^{H} .
\]
Since $H$ has zero column sums,
$h$ is homogeneous of degree zero,
that is, $h(\alpha y) = h(y)$ for $\alpha \in \R_>$.

On the one hand, 
every $y \in P$ is a solution to $y^H=c^H$ for some~$c$ (just choose $c=y$).
On the other hand, 
the set of right-hand sides for all~$c$ amounts to $\{c^H \mid c \in \R^m_>\} = \R^d_>$, since $H$ has full rank $d$.
As a consequence, $|Y_c|=1$ for all~$c$ 
if and only if
$h |_P$ (the restriction of $h$ to $P \subset \R^m_>$) is bijective.
Note that, for $h \colon P \to \R^d_>$ to be bijective,
the dimensions $d_P = \dim P$ and $d$ must agree.

\green{To characterize the nondegeneracy of $y \in Y_c$ (that is, of $y \in P$ with $y^H=c^H$),
we use the parametrization $p \colon \R^{\dP} \to P$ of the polytope $P$ 
given in Equation~\eqref{eq:param}.
That is, we consider $\xx \in \R^{\dP}$ with $p(\xx)^H = c^H$.}

\begin{definition} 
Let $v^1,v^2, \ldots ,v^{\nP}$ denote the vertices of the coefficient polytope~$P$. We introduce the map
\begin{equation} \label{eq:F}
\map \colon \R^{\dP} \to \R^d_>, \quad
\xx \mapsto p(\xx)^H = \left(\frac{\sum_{k=1}^{\nP} \e^{ v^k \cdot \basT \xx} v^k }{ \sum_{k=1}^{\nP} \e^{v^k \cdot \basT \xx}}\right)^{H} .
\end{equation}
\end{definition}
That is, $\map(\xi) = h(p(\xi))$.
Since $h$ is homogeneous of degree zero,
we can simplify the definition as
\begin{equation} \label{eq:F_simple}
\map \colon \R^{\dP} \to \R^d_>, \quad
\xx \mapsto \left(\sum_{k=1}^{\nP} \e^{ v^k \cdot \basT \xx } v^k \right)^{H}.
\end{equation}

Now we can reformulate the problem defined in Section~\ref{sec:problem}.

\begin{proposition} \label{pro:reform}
$|Y_c|=1$
and $y \in Y_c$ is \green{nondegenerate} for all~$c$
if and only if 
the map~$\map$ defined in Equation~\eqref{eq:F} is bijective
and $\ker J_\map (\xi) = \{0\}$ for all $\xi \in \R^{d_P}_>$.
\end{proposition}

\begin{proof}
By the argument above,
$|Y_c|=1$ for all~$c$ is equivalent to $h |_P$ being a bijection.
Since the parametrization $p \colon \R^{\dP} \to P$ of the coefficient polytope $P$ given in Equation~\eqref{eq:param} is a real analytic isomorphism,
$h |_P $ is bijective 
if and only if 
$\map$ is bijective. 
Clearly, a solution $\xx \in \R^{\dP}$ to $\map(\xi) = p(\xx)^H = c^H$ has Jacobian matrix $J_\map (\xi)$.
It is nondegenerate if its Jacobian matrix is injective.
\end{proof}

As indicated above, for $\map$ to be bijective, the dimensions of domain and range must match,
that is, $\dP=d$,
which we assume in the following.
Then, $\ker J_\map = \{0\}$ is equivalent to $\det J_\map \neq 0$,
and we can show that the conditions on $\map$ in Proposition~\ref{pro:reform} 
are equivalent to $\map$ being a \green{diffeomorphism}.

\begin{proposition} \label{pro:diff} 
Let $U,V \subset \mathbb{R}^n$ be open, and let $\Psi \colon U \to V$ be $C^1$.  
Then,
\[
\Psi \text{ is a diffeomorphism } \iff \Psi \text{ is bijective and } \det J_\Psi (x) \neq 0 \text{ for all } x\in U.
\]
\end{proposition}

\begin{proof}
($\Rightarrow$) Let $\Psi $ be a diffeomorphism. Then it is bijective and has a $C^1$ inverse.  
By the chain rule, we have
$J_{\Psi^{-1}}(\Psi(x)) J_\Psi(x)=\id$ for every $x\in \R^n$. Therefore, $J_\Psi (x)$ is invertible for all $x$ and hence $\det J_\Psi(x) \neq 0$.  

($\Leftarrow$) Let $\Psi $ be bijective and $\det J_\Psi(x) \neq 0$ for all $x\in U$.

Let $y \in V$ and $x = \Psi^{-1}(y)$.
By the Inverse Function Theorem (since ${\det J_\Psi(x) \neq 0}$), 
there exists a neighborhood $U_x \subset U$ of $x$ such that 
    \[
    \Psi|_{U_x} \colon U_x \to V_y
    \] 
    is a $C^1$-diffeomorphism onto some neighborhood $V_y \subset V$ of $y$. 
    Let 
    \[
    \Psi'_x = (\Psi|_{U_x})^{-1} : V_y \to U_x
    \]
    be the local inverse.  
    Since $\Psi$ is injective and hence $\Psi^{-1}$ is the unique global inverse,
    \[
    \Psi'_x(y) = \Psi^{-1}(y) \text{ for all } y \in V_y.
    \]
Thus, $\Psi^{-1}$ is $C^1$ on $V_y$.
Since $y \in V$ was arbitrary, $\Psi^{-1}$ is $C^1$ on $V$.
\end{proof}

Indeed, the map~$\map$ defined in Equation~\eqref{eq:F} 
satisfies the conditions (on $\Psi$) in Proposition~\ref{pro:diff},
and we will characterize when this map is a \green{diffeomorphism}.

Note that $\map$ depends on the basis matrices $\basT$ and $H$ of the linear subspaces $T$ and $D$,
but ultimately we aim for characterizations in terms of subspaces rather than matrices.


\subsection{Hadamard's global inversion theorem}

To characterize when the map~$\map$ defined in Equation~\eqref{eq:F} is a \green{diffeomorphism},
we employ Hadamard's global inversion theorem in the form of Theorem~\ref{thm:hadamard_maps} (cf. Theorem B in~\cite{gordon1972diffeomorphisms} or Satz II in~\cite{banach1934mehrdeutige}).

Recall that a map $\Psi \colon X \to Y$ is called \emph{proper} if, for every compact subset $K$ of $Y$, the preimage $\Psi^{-1}(K)$ is a compact subset of $X$. 
Further, let $J_{\Psi}$ denote the Jacobian matrix of a map $\Psi$.

\begin{theorem} \label{thm:hadamard_maps}
Let $U \subseteq \R^n$ be open and convex. A $C^1$-map $\Psi \colon \R^n \to U$ is a diffeomorphism if and only if 
\begin{enumerate}[(i)]
    \item $\det(J_{\Psi}(x)) \neq 0$ for all $x\in\R^n$, i.e., $\Psi$ is locally invertible and
    \item $\Psi$ is proper.
\end{enumerate}
\end{theorem}

The properness of a continuous map can be determined by considering unbounded sequences,
cf.~Lemma~10 in~\cite{MuellerHofbauerRegensburger2019}.
For the convenience of the reader, we provide a streamlined proof.

\begin{lemma} \label{lem:proper}
Let $U \subseteq \R^n$ be open.
A continuous map $\Psi \colon \R^n \to U$ is proper if
and only if
$\Psi(x_k t_k) \to y$ implies $y \in \partial U$
for all sequences 
\begin{itemize}
\item
$(x_k)_{k \in \N}$ in $\R^n$ with $|x_k| = 1$ and $x_k \to x$ and \item 
$(t_k)_{k \in \N}$ in $\R_>$ with $t_k \to \infty$.
\end{itemize}
\end{lemma}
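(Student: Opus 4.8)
The plan is to prove the characterization of properness in Lemma~\ref{lem:proper} by unwinding the definition and exploiting the specific way unboundedness can occur in $\R^n$. The key observation is that every unbounded sequence in $\R^n$ can, after passing to a subsequence, be written in the form $x_k t_k$ with $|x_k|=1$, $x_k\to x$ (by compactness of the unit sphere) and $t_k\to\infty$; conversely any such product is an unbounded sequence. So the two bulleted conditions are exactly an encoding of ``arbitrary unbounded sequences, normalized by direction.'' With this in mind, the proof naturally splits into the two implications.

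For the ``if'' direction (the stated sufficiency), I would argue by contraposition: suppose $f$ is not proper, so there is a compact $K\subseteq U$ whose preimage $f^{-1}(K)$ is not compact. Since $f$ is continuous, $f^{-1}(K)$ is closed, hence non-compactness forces it to be unbounded; pick a sequence $(z_k)$ in $f^{-1}(K)$ with $|z_k|\to\infty$. Writing $z_k = x_k t_k$ with $t_k=|z_k|\to\infty$ and $x_k=z_k/|z_k|$ on the unit sphere, pass to a subsequence so that $x_k\to x$ and (using compactness of $K$) so that $f(z_k)=f(x_k t_k)\to y$ for some $y\in K\subseteq U$. This $y$ lies in the interior $U$, not in $\partial U$, contradicting the hypothesis. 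Hence $f$ must be proper.

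For the ``only if'' direction, suppose $f$ is proper and take sequences $x_k\to x$ with $|x_k|=1$ and $t_k\to\infty$ such that $f(x_k t_k)\to y$; I must show $y\in\partial U$. Since $y=\lim f(x_k t_k)$ and each value lies in $U$, certainly $y\in\overline U$. If $y$ were an interior point of $U$, then a small closed ball $K=\overline{B}(y,r)\subseteq U$ would be compact, and the tail of the sequence $(f(x_k t_k))$ would lie in $K$, so the points $x_k t_k$ would lie in $f^{-1}(K)$, which is compact by properness and hence bounded. But $|x_k t_k|=t_k\to\infty$ gives a contradiction. Therefore $y\notin\interior U$, and combined with $y\in\overline U$ this yields $y\in\partial U$, as required.

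I expect the main obstacle to be not any single estimate but the careful handling of subsequences and the reconciliation of the statement's somewhat rigid normalized form $x_k t_k$ with the general notion of properness, which refers to arbitrary compact sets rather than sequences. One should be explicit that non-compactness of a closed set in $\R^n$ is equivalent to unboundedness, and that every unbounded sequence admits a subsequence of the normalized form above; these are the two places where the finite-dimensionality of $\R^n$ and compactness of the sphere are genuinely used. I would state these reductions cleanly at the outset so that both implications become short, and I would take care that in the ``only if'' direction the right notion is the topological boundary $\partial U = \overline U \setminus \interior U$ relative to $\R^n$, consistent with $U$ being open (so $\partial U \cap U = \emptyset$).
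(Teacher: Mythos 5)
Your proposal is correct and follows essentially the same route as the paper: both directions are handled by the same contrapositive arguments, using that $f^{-1}(K)$ is closed (so non-compactness means unboundedness by Heine--Borel), normalizing an unbounded sequence as $x_k t_k$ with $x_k$ on the unit sphere, and passing to subsequences via compactness of the sphere and of $K$. The paper merely presents the two implications in the opposite order; the content is the same.
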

\begin{proof}
Let $\Psi(x_k t_k) \to y$, but $y \in U$,
and take a closed ball $K \subset U$ around $y$.
Then $\Psi^{-1}(K)$ contains the unbounded sequence $(x_k t_k)_{k \ge N}$ for some $N \in \N$ and hence is not compact, that is, $\Psi$ is not proper.

Conversely, assume $\Psi(x_k t_k) \to y$ implies $y \in \partial U$ (for all sequences),
let $K$ be a compact subset of $U$,
and show that $\Psi$ is proper,
that is, $\Psi^{-1}(K)$ is compact.
Since $\Psi^{-1}(K)$ is closed,
it suffices to show that every sequence $X_k$ in $\Psi^{-1}(K)$ has a bounded subsequence. 
Assume the contrary; then $|X_k| \to \infty$. 
Since ${\Psi(X_k) \in K}$, there is a subsequence (call it $X_k$ again) such
that $\Psi(X_k) \to y \in K$. Now there is a subsubsequence (call it $X_k$ again) such that $x_k = X_k /|X_k| \to x$, that is, the sequence $x_k$ on the unit sphere converges.
With $t_k = |X_k|$, we have $\Psi(x_k t_k) \to y \in K \subset U$, a contradiction.
\end{proof}


\section{Main results} \label{sec:main}

By Propositions~\ref{pro:reform}
and~\ref{pro:diff},
there exists a unique, \green{nondegenerate} solution in the coefficient polytope for all parameters 
if and only if 
the map~$\map$ defined in Equation~\eqref{eq:F} is a \green{diffeomorphism}.
By Theorem~\ref{thm:hadamard_maps} (Hadamard's global inversion theorem), 
this is further equivalent to $\map$ being (i) locally invertible and (ii) proper.
In Subsections~\ref{subsec:loc_inv} and \ref{subsec:proper},
we characterize local invertibility and properness, respectively,
in terms of the relevant geometrical objects,
and in \ref{subsec:diffeo}, we combine our results to characterize when $\map$ is a diffeomorphism.
Finally, in Subsection~\ref{subsec:suf_con},
we provide sufficient conditions 
in terms of sign vectors
that can be viewed as a {\em genuine} multivariate Descartes' rule for 
exactly one solution.


\subsection{Local invertibility of \texorpdfstring{$\map$}{pdf}} \label{subsec:loc_inv}

We characterize the local invertibility of the map~$\map$ in terms of matrices and geometric objects,
respectively, see Theorems~\ref{thm:local_invert_1} and~\ref{thm:local_invert_2} below.

To this end,
we first compute the Jacobian matrix $J_\map$ of the map $\map(\xi)=h(p(\xi))$ defined in Equation~\eqref{eq:F},
\[
J_\map(\xx) = J_h (p(\xx)) \, J_p (\xx) .
\]
Recall $h(y) = y^H$, in particular, $h_i(y) = \prod_{l=1}^m (y_l)^{H_{li}}$. Hence,
\begin{align*}
\DD{h_i}{y_j} &= \prod_{l \neq j} (y_l)^{H_{li}} \cdot H_{ji} \, (y_j)^{H_{ji} - 1} \\
&= h_i(y) \, H_{ji} \, (y_j)^{-1} ,
\end{align*}
that is,
\[
J_h (y) = \diag (h(y)) \, H^\trans \diag (y^{-1}) .
\]

Now, we can provide a first characterization of local invertibility.
\begin{theorem} \label{thm:local_invert_1}
For the map~$\map$ defined in Equation~\eqref{eq:F},
the following statements are equivalent:
\begin{enumerate}[(i)]
\item
The map~$\map$ is locally invertible. \\
(That is, 
$J_\map(\xx)$ is injective,
for all $\xx$.)
\item The matrix
$
H^\trans \diag((p(\xx)^{-1}) \, \basT 
$
is injective, 
for all $\xx$.
\end{enumerate}
\end{theorem}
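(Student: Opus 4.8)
The plan is to exploit the factorization $J_f(\xi) = J_h(p(\xi)) \, J_p(\xi)$ together with the explicit form of $J_h$ computed above. Writing $y = p(\xi)$, we have $J_h(y) = \diag(h(y)) \, H^\trans \diag(y^{-1})$, and the diagonal prefactor $\diag(h(y))$ is invertible because $h(y) = y^H \in \R^d_>$. Consequently, for each fixed $\xi$, the matrix $J_f(\xi)$ is injective if and only if $H^\trans \diag(p(\xi)^{-1}) \, J_p(\xi)$ is injective, so the first step is simply to discard this invertible prefactor.

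The key observation is a general linear-algebra fact: for an injective matrix $W$ and an arbitrary matrix $\phi$, the composition $\phi \, W$ is injective if and only if $\ker \phi \cap \im W = \{0\}$. Indeed, $\ker(\phi \, W) = \{ x \mid W x \in \ker \phi \}$, and since $W$ restricts to an isomorphism onto $\im W$, this kernel is trivial exactly when $\ker \phi \cap \im W = \{0\}$. I would apply this with $\phi = H^\trans \diag(p(\xi)^{-1})$ and, in turn, with the two choices $W = J_p(\xi)$ and $W = \basT$.

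This is precisely where Proposition~\ref{pro:Jp} does the work: it guarantees $\ker J_p(\xi) = \{0\}$ and $\im J_p(\xi) = \ker \mathcal{A} = T$ for \emph{every} $\xi$, while by construction $\basT$ is a basis matrix of $T$, so $\ker \basT = \{0\}$ and $\im \basT = T$ as well. Thus both $J_p(\xi)$ and $\basT$ are injective with the same image $T$, independently of $\xi$. Applying the observation to each, we obtain, for fixed $\xi$,
\[
H^\trans \diag(p(\xi)^{-1}) \, J_p(\xi) \text{ injective}
\iff \ker\!\big(H^\trans \diag(p(\xi)^{-1})\big) \cap T = \{0\}
\iff H^\trans \diag(p(\xi)^{-1}) \, \basT \text{ injective} .
\]
Quantifying over all $\xi$ then yields the equivalence of (i) and (ii).

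I expect no serious obstacle here. The only point requiring care is to confirm that injectivity of the composition depends only on the common image $T$ and not on the particular matrix realizing it — which is exactly the content of the linear-algebra observation — and to invoke Proposition~\ref{pro:Jp} so that $\im J_p(\xi) = T$ holds for all $\xi$. This makes the equivalence hold pointwise in $\xi$, and hence it survives the quantification ``for all~$\xi$''.
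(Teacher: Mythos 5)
Your argument is correct and is essentially the paper's own proof: both factor $J_f(\xx) = J_h(p(\xx))\,J_p(\xx)$, strip off the invertible diagonal factor $\diag(h(p(\xx)))$, and use Proposition~\ref{pro:Jp} (that $J_p(\xx)$ is injective with image $\ker\mathcal{A} = T = \im \basT$) to replace $J_p(\xx)$ by $\basT$. You merely make explicit the linear-algebra lemma (injectivity of $\phi\,W$ for injective $W$ depends only on $\ker\phi \cap \im W$) that the paper uses implicitly.
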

\begin{proof}
We show that $\neg$(i) and $\neg$(ii) are equivalent.

Since the parametrization $p$ given in Equation~\eqref{eq:param} is a real analytic isomorphism (and hence an injective immersion), $\ker J_p(\xx) = \{0\}$ and $\im J_p(\xx) = \ker \mathcal{A}$ for all~$\xx$,
by Lemma~\ref{lem:Jp}.
Using $\ker \mathcal{A} = \im G$,
the existence of $\xx$ and a nonzero $\alpha \in \R^{\dP}$ such that 
\[
0 = J_\map(\xx) \, \alpha = J_h(p(\xx)) \, J_p(\xx) \, \alpha
= \diag(h(p(\xx))) \, H^\trans \diag(p(\xx)^{-1}) J_p(\xx) \, \alpha
\]
is equivalent to the existence of $\xx$ and a nonzero $\beta \in \R^{\dP}$ such that
\[
H^\trans \diag(p(\xx)^{-1}) \, \basT \beta = 0 .
\]
\end{proof}

In fact, we can characterize local invertibility without using the parametrization of the coefficient polytope (via the map $p$) and the basis matrices $\basT$ and $H$ (of the linear subspaces $T$ and $D$),
but just in terms of the polytope and the subspaces.

\begin{theorem} \label{thm:local_invert_2}
For the map~$\map$ defined in Equation~\eqref{eq:F},
the following statements are equivalent:
\begin{enumerate}[(i)]
\item
The map~$\map$ is {\em not} locally invertible. 
\item 
There exist $y \in P$ and nonzero $t \in T$ and $\bar d \in D^\perp$ such that $t = y \circ \bar d$.
\end{enumerate}
\end{theorem}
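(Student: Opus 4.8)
The plan is to reduce Theorem~\ref{thm:local_invert_2} to the matrix criterion already established in Theorem~\ref{thm:local_invert_1} by a chain of invertible change-of-variable correspondences. By Theorem~\ref{thm:local_invert_1}, the map $f$ fails to be locally invertible precisely when there exist $\xx \in \R^{\dP}$ and a nonzero $\beta \in \R^{\dP}$ with
\[
H^\trans \diag(p(\xx)^{-1}) \, \basT \, \beta = 0 .
\]
I would translate each ingredient of this equation into the intrinsic language of the polytope $P$ and the subspaces $T$ and $D$, thereby matching condition~(ii).

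First I would eliminate the parametrization: since $p \colon \R^{\dP} \to P$ is a real analytic isomorphism (Proposition~\ref{pro:moment_extended}), letting $\xx$ range over $\R^{\dP}$ is the same as letting $y := p(\xx)$ range over all of $P$. In particular $y \in \R^m_>$, so $\diag(y^{-1})$ is well-defined and invertible. Next I would eliminate the basis matrix $\basT$: because $\basT$ is an injective basis matrix with $\im \basT = T$ and $\dim T = \dP$, the map $\beta \mapsto t := \basT \beta$ is a linear isomorphism onto $T$, so ranging over nonzero $\beta$ is the same as ranging over nonzero $t \in T$.

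The heart of the argument is the final substitution. Setting $\bar d := y^{-1} \circ t = \diag(y^{-1}) \, t$, the positivity of $y$ gives $t = y \circ \bar d$ and, since $\diag(y^{-1})$ is invertible, $t \neq 0 \iff \bar d \neq 0$. Moreover $H^\trans \bar d = H^\trans \diag(y^{-1}) \, t$, and because $\im H = D$ one has $H^\trans \bar d = 0$ exactly when $\bar d \perp D$, that is, $\bar d \in D^\perp$. Putting these three correspondences together, the displayed matrix equation holds for some $\xx$ and nonzero $\beta$ if and only if there exist $y \in P$, a nonzero $t \in T$, and $\bar d \in D^\perp$ with $t = y \circ \bar d$, which is precisely condition~(ii).

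I do not expect a genuine obstacle here: once Theorem~\ref{thm:local_invert_1} is in hand, the statement is a dictionary translation through the invertible transformations $\xx \leftrightarrow y$, $\beta \leftrightarrow t$, and $t \leftrightarrow \bar d$. The only point requiring care is bookkeeping of the bijections, namely verifying that each substitution is genuinely onto the claimed set (all of $P$ via $p$, all of $T$ via $\basT$, and the equivalence $H^\trans \bar d = 0 \iff \bar d \in D^\perp$ capturing $D^\perp$), and that the nonzero conditions transfer faithfully, which they do because $\basT$ is injective and $\diag(y^{-1})$ is invertible.
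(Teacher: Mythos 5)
Your proposal is correct and follows essentially the same route as the paper: reduce to the matrix criterion of Theorem~\ref{thm:local_invert_1}, then translate via the bijections $\xx \leftrightarrow y = p(\xx)$, $\beta \leftrightarrow t = \basT\beta$, and $\ker H^\trans = D^\perp$ to arrive at $t = y \circ \bar d$. Your explicit remark that $t \neq 0 \iff \bar d \neq 0$ (since $y > 0$) is a small bookkeeping point the paper leaves implicit, but otherwise the arguments coincide.
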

\begin{proof}
By Theorem~\ref{thm:local_invert_1},
(i) is equivalent to the existence of $\xx$ and a nonzero~$\beta$ such that $H^\trans \diag(p(\xx)^{-1}) \, \basT \beta = 0$.
Since $y = p(\xx)$ and $\xx$ are in one-to-one correspondence by the map~$p$,
this is further equivalent to the existence of $y \in P$ and a nonzero~$\beta$ such that 
\[ H^\trans \diag(y^{-1}) \, \basT \beta = 0 . \]
Since $\im \basT = T$ and $\ker H^\trans = D^\perp$,
this is further equivalent to the existence of $y \in P$ and nonzero~$t \in T$ and $\bar d \in D^\perp$ such that
\[
\diag(y^{-1}) \, t = \bar d ,
\]
that is, $t = y \circ \bar d$.
\end{proof}


\subsection{Properness of \texorpdfstring{$\map$}{pdf}} \label{subsec:proper}

We characterize the properness of the map~$\map$ in terms of matrices and geometric objects,
respectively, see Theorems~\ref{thm:proper_1} and~\ref{thm:proper_2} below.

In Lemma~\ref{lem:proper_rays}, 
we first show that the map~$\map$ defined in Equation~\eqref{eq:F} is proper if and only if it is ``proper along rays''. 
Hence, for $\xv \in \R^{\dP}$ (with $|\xv|=1$), we consider $\map(\xv t)$ as a function of $t>0$ (in particular, for $t \to \infty$).

In the form of Equation~\eqref{eq:F_simple}, $\map$ is given by
\[
\map \colon \R^{\dP} \to \R^d_>, \quad
\xv \mapsto \left( \sum_{k=1}^{\nP} \e^{ G^\trans v^k \cdot \xv} v^k \right)^{H} ,
\]
and for $i \in \{1,\ldots,d\}$,
\[ 
\map_i(\xv t) = \prod_{j=1}^m \left( \sum_{k=1}^{\nP} \e^{G^\trans v^k \cdot \xv t} v_j^k \right)^{H_{ji}} 
= \prod_{j=1}^m \left( \sum_{k=1}^{\nP}  \e^{ \mu^k t} v_j^k \right)^{H_{ji}} ,
\]
where we introduced $\mu^k(\xv) := G^\trans v^k \cdot \xv$, for $k=1,\ldots,\nP$.

Now, for $j=1,\ldots,m$,
let 
\begin{equation} 
\mu_j^{\rm{max}}(\xv) := \max_{k\colon v_j^k\neq 0} \mu^k (\xv) = \max_{k\colon v_j^k\neq 0} \, G^\trans v^k \cdot \xv.  
\end{equation}
Then,
\[
\begin{aligned}
\map_i(\xv t)  &= \prod_{j=1}^m \left(
\sum_{k\colon\mu^k < \mu_j^{\rm{max}}} \e^{ \mu^k t}v^k_j +  \sum_{k\colon\mu^k =\mu_j^{\rm{max}}} \e^{ \mu^k t} v^k_j\right)
^{H_{ji}} \\
&= \prod_{j=1}^m \left[\e^{\mu^{\rm{max}}_j t}\left(
\sum_{k\colon\mu^k < \mu_j^{\rm{max}}} \e^{(\mu^k -\mu^{\rm{max}}_j) t} v^k_j +  \sum_{k\colon\mu^k =\mu_j^{\rm{max}}}  \, v^k_j\right)\right]
^{H_{ji}} \\
&= \prod_{j=1}^m \e^{\mu^{\rm{max}}_j H_{ji} \, t} \prod_{j=1}^m\left(
\sum_{k\colon\mu^k < \mu_j^{\rm{max}}}  \e^{(\mu^k -\mu^{\rm{max}}_j) t} v^k_j + \sum_{k\colon\mu^k =\mu_j^{\rm{max}}}  \, v^k_j\right)
^{H_{ji}} \\
&= 
\e^{\sum_{j=1}^m \mu^{\rm{max}}_j H_{ji} \, t} \prod_{j=1}^m \left(
\sum_{k\colon\mu^k < \mu_j^{\rm{max}}} \e^{(\mu^k -\mu^{\rm{max}}_j) t} v^k_j +  \sum_{k\colon\mu^k =\mu_j^{\rm{max}}}  \, v^k_j \right)^{H_{ji}} .
\end{aligned}
\]

That is,
we have the limit
\begin{equation} \label{eq:limit}
\map_i(\xv t) \e^{- \eta_i(\xv) t} \to 
\prod_{j=1}^m \left( \sum_{k\colon\mu^k =\mu_j^{\rm{max}}} v^k_j \right)^{H_{ji}} > 0 
\text{ as } t \to \infty ,
\end{equation}
where
\begin{equation} \label{eq:eta}
\eta_i (\xv) := \sum_{j=1}^m \mu^{\rm{max}}_j (\xv) \, H_{ji} .
\end{equation}

We use this limit in the proof of Lemma~\ref{lem:proper_rays},
which states that $\map$ is proper if and only if it is ``proper along rays''.
For a family of exponential maps,
which are structurally simpler than the composite (\mono-\expo) map~$\map$ in this work,
a corresponding result has been obtained in previous work~\cite[Lemma~11]{MuellerHofbauerRegensburger2019}.

\begin{lemma} \label{lem:proper_rays}
The map~$\map$ defined in Equation~\eqref{eq:F} is proper
if and only if
the ray condition
\[ \tag{$\ast$} \label{eq:ray_condition}
\map(\xv t) \to z \text{ as }t \to \infty \quad \text{implies} \quad z\in\partial\R^d_\ge 
\]
holds for all nonzero $\xv\in\R^{\dP}$. 
\end{lemma}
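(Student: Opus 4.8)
The plan is to prove Lemma~\ref{lem:proper_rays} by applying the general properness criterion of Lemma~\ref{lem:proper} to our specific map $f$, and then exploiting the homogeneity-of-degree-zero structure inherited from $h$ to reduce the two-sequence condition of Lemma~\ref{lem:proper} to a single-parameter ray condition. Throughout, I take $U = \R^d_>$ (open in $\R^d$), so that $\partial U = \partial \R^d_\ge$, and recall that $f \colo \R^{\dP} \to \R^d_>$ with $\dP = d$.

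\medskip

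The forward direction is immediate: if $f$ is proper, then by Lemma~\ref{lem:proper} (applied with the constant sequence $x_k = \xv/|\xv|$ and any $t_k \to \infty$), convergence $f(\xv t) \to z$ forces $z \in \partial \R^d_\ge$, which is exactly~\eqref{eq:ray_condition}. So the content is the converse. First I would assume the ray condition~\eqref{eq:ray_condition} and verify the hypothesis of Lemma~\ref{lem:proper}, namely that for every pair of sequences $(x_k)$ on the unit sphere with $x_k \to x$ and $(t_k)$ in $\R_>$ with $t_k \to \infty$, the convergence $f(x_k t_k) \to y$ implies $y \in \partial \R^d_\ge$. The natural strategy is to compare $f(x_k t_k)$ with $f(x t_k)$, the latter being a genuine ray in the limiting direction $x$ on which the hypothesis~\eqref{eq:ray_condition} applies directly.

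\medskip

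The key step is a continuity/uniformity estimate showing that the perturbation from $x_k$ to $x$ does not change the limit. Writing $f(\xv) = \bigl(\sum_k \e^{v^k \cdot \basT \xv} v^k\bigr)^H$ as in~\eqref{eq:f_simple}, I would examine the behaviour of the exponents $v^k \cdot \basT(x_k t_k) = t_k\,(v^k \cdot \basT x_k)$ as $k \to \infty$. Because $x_k \to x$, the linear forms $v^k \cdot \basT x_k$ converge to $v^k \cdot \basT x$; the subtlety is that they are multiplied by $t_k \to \infty$, so even a small difference $v^k \cdot \basT(x_k - x)$ is amplified. The clean way to handle this is to track which vertices $v^k$ dominate the sum: the limit of the normalized coefficients $\la_k = \e^{t_k (v^k \cdot \basT x_k)}/\sum_l \e^{t_l (v^l \cdot \basT x_l)}$ is governed by $\argmax_k (v^k \cdot \basT x)$ (the vertices maximizing the linear functional $v \mapsto v \cdot \basT x$ over the vertex set), and this argmax set is the same whether computed with $x_k$ or with the limit $x$, for $k$ large, provided one is careful about ties. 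Consequently $p(x_k t_k)$ and $p(x t_k)$ approach the same face of $P$, hence the same limit point of $P$ as $t_k \to \infty$, so that $f(x_k t_k) \to z$ where $z = \lim_{t\to\infty} f(x t)$; by~\eqref{eq:ray_condition} applied to the nonzero vector $x$, this limit lies in $\partial \R^d_\ge$.

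\medskip

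The main obstacle is precisely the argmax/tie-breaking analysis: when $x_k \to x$, the vertices that maximize $v \cdot \basT x_k$ may differ from those maximizing $v \cdot \basT x$ if several vertices attain values close to the maximum, and the amplification by $t_k$ could in principle let a sub-maximal-at-$x$ vertex dominate along the sequence. To control this I would argue that any vertex $v^k$ with $v^k \cdot \basT x$ strictly below the maximum contributes a weight $\la_k$ that decays to zero (since the exponent gap is of order $t_k$ times a strictly negative constant, and the additive correction from $x_k - x$ is of lower order $o(t_k)$ once $|x_k - x|$ is small relative to the gap), so only the face of $P$ selected by $x$ survives in the limit. Making the ``lower order'' comparison rigorous — ensuring $t_k |x_k - x| \to 0$ is \emph{not} needed, but rather that the $O(1)$ gaps beat the $o(1)$ directional errors after multiplication by $t_k$ — requires splitting the vertices into the $x$-maximal face and its complement and estimating each group separately. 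Once this face-selection argument is in place, the reduction to the single-ray hypothesis is routine, and the equivalence follows.
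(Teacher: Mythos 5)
Your overall strategy coincides with the paper's: reduce to Lemma~\ref{lem:proper}, compare the sequence $f(\xv_k t_k)$ with the ray $f(\xv t)$ in the limiting direction, and control the comparison by tracking which vertices dominate the exponential sum. The forward direction and the reduction are fine. The gap is in the central claim that ``$p(\xv_k t_k)$ and $p(\xv t_k)$ approach the same face of $P$, hence the same limit point of $P$, so that $f(\xv_k t_k)\to z=\lim_t f(\xv t)$.'' This fails twice. First, ``same face'' does not give ``same limit point'': for two vertices $v^k,v^l$ both maximal at $\xv$, one has $(v^k-v^l)\cdot \basT\xv=0$ but $(v^k-v^l)\cdot \basT\xv_n=O(|\xv_n-\xv|)$, and after multiplication by $t_n$ this can converge to any value in $[-\infty,+\infty]$; the relative weights of the tied vertices, and hence the limit point of $p$ inside the face, are therefore not determined by the limiting direction. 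Second, and more seriously, even equal limit points of $p$ would not give equal limits of $f$: the limit lies on $\partial\overline P$, where some coordinates of $p$ vanish, and $f=p^H$ with $H$ having negative entries is not continuous there. What governs $\lim f_i$ is not the limit point of $p$ but the exponential \emph{rates} $\mu^{\rm max}_j(\xv)-\mu^*(\xv)$ at which the coordinates $j\in\zero(F)$ vanish, aggregated through $\eta_i(\xv)=\sum_j \mu^{\rm max}_j(\xv)H_{ji}$. Since $\eta_i(\xv_n)t_n$ can converge to anything when $\eta_i(\xv)=0$, the asserted equality $\lim f(\xv_k t_k)=\lim f(\xv t)$ is simply false in general.

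The statement is nevertheless provable, because equality of limits is more than is needed: one only needs that \emph{some} component of the sequence limit vanishes. The paper's proof isolates exactly the surviving information, namely that $\eta_i$ is continuous in $\xv$ (being a linear combination of maxima of finitely many linear forms), so its \emph{sign}, when nonzero, is locally stable; together with a uniform positive bound on the finitely many possible prefactors $\prod_j\bigl(\sum_{k:\mu^k=\mu_j^{\rm max}}v^k_j\bigr)^{H_{ji}}$, this shows that $\eta_i(\xv)>0$ forces $f_i(\xv_n t_n)\to\infty$ (ruling out the case where $f(\xv t)$ does not converge and the ray condition is vacuous — a case your write-up does not address, since you presuppose the ray limit exists) and $\eta_i(\xv)<0$ forces $f_i(\xv_n t_n)\to 0$. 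The ray condition supplies an index with $\eta_i<0$, and that single index yields $z'\in\partial\R^d_\ge$. To repair your argument you would have to replace the face-selection/limit-point reasoning by this sign-of-rate analysis; the face of $\overline P$ selected by $\xv$ determines which coordinates of $p$ vanish but not how fast, and for a power map with mixed-sign exponents the ``how fast'' is the whole point.
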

\begin{proof}
We assume that the ray condition \eqref{eq:ray_condition}
holds. 

In order to apply Lemma~\ref{lem:proper},
we consider a sequence on the unit sphere, that is, 
$\xv_n$ in $\R^{\dP}$ with $|\xv_n|=1$ and $\xv_n \to \xv$ (with $|\xv|=1$),
and a sequence $t_n$ in $\R_>$ with $t_n \to \infty$.

First, we show
\[
|\map(\xv t)|\to \infty \text{ as } t\to \infty \quad \text{implies} \quad |\map(\xv_nt_n)|\to \infty \text{ as } n\to \infty .
\]

Clearly, $|\map(\xv t)|\to \infty$ as $t\to \infty$ implies that there exists an index $i\in\{1,\ldots,d\}$ such that
\[
\map_i(\xv t) \to \infty \text{ as } t\to \infty .
\]

Using the limit~\eqref{eq:limit}, this implies
\[
\eta_i(\xv) > 0 ,
\]
cf.~Definition~\eqref{eq:eta}.

Next, we consider $\xv'$ close to $\xv$,
and introduce $\mu'^k := \mu^k(\xv') = w^k \cdot \xv'$, 
\[ \mu'^{\max}_j := \mu^{\max}_j(\xv') = \max_{k\colon v_j^k\neq 0} \, \mu^k (\xv') , \] 
and 
\[ \eta_i' := \eta_i(\xv') = \sum_{j=1}^m \mu^{\rm{max}}_j (\xv') \, H_{ji} \] 
close to $\eta_i$. 
In particular, $\eta_i'>\frac{\eta_i}{2}$.
Using the limit~\eqref{eq:limit} again (for $\xv'$ rather than $\xv$),
that is,
\[
\map_i(\xv't) \e^{- \eta'_i t} \to 
\prod_{j=1}^m \left( \sum_{k\colon\mu'^k =\mu'^{\rm{max}}_j}  \, v^k_j \right)^{H_{ji}} > 0
\text{ as } t \to \infty ,
\]
%
there is $\gamma(\xv')>0$
such that
\[
\map_i(\xv't) \e^{- \eta'_i t} > 
\gamma(\xv') 
\text{ as } t \to \infty .
\]

Since this holds for any $\xv'$,
there is $\gamma > 0$ (independent of $\xv'$)
such that
\[
\map_i(\xv't) \e^{- \eta'_i t} > \gamma 
\text{ as } t \to \infty ,
\]
and since $\eta_i'>\frac{\eta_i}{2}$,
\[
\map_i(\xv't) \e^{- \frac{\eta_i}{2} t} > \gamma 
\text{ as } t \to \infty .
\]

Now, for the sequences $\xv_n$ and $t_n$ (in Lemma~\ref{lem:proper}),
\[ 
\map_i(\xv_n t_n) \e^{- \frac{\eta_i}{2} t_n} > \gamma  
\text{ as } n \to \infty 
\]
and hence
\[ 
\map_i(\xv_n t_n) \to \infty \text{ as } n \to \infty 
\]
such that
\[ 
|\map(\xv_nt_n)| \to \infty \text{ as } n\to\infty ,
\]
as claimed.

Finally,
let $\map(\xv_nt_n)\to z'$ as $n\to \infty$. 
By the argument above,
$\map(\xv t)\to z$ as $t\to\infty$, 
and by the ray condition~\eqref{eq:ray_condition}, 
$z\in \partial \R^d_\ge$. 
In particular, there exists an index $i \in \{1,\ldots, d\}$ such that $\map_i(\xv t) \to 0$ as $t\to \infty$
and hence $\eta_i < 0$, cf.~Equation~\eqref{eq:limit}.
Proceeding as above,
we obtain $\map_i (\xv_n t_n) \to 0$ as $n \to \infty$
and hence $z' \in \partial \R^d_\ge$, as claimed. By Lemma~\ref{lem:proper}, the map~$\map$ is proper.

For the other direction, assume that $\map$ is proper. 
The conditions in Lemma~\ref{lem:proper} hold for all sequences,
in particular, for sequences on rays.
Hence, the ray condition~\eqref{eq:ray_condition} holds.
\end{proof}

Now, we can provide a first characterization of properness.
\begin{theorem} \label{thm:proper_1}
The map~$\map$ defined in Equation~\eqref{eq:F} is proper
if and only if,
for all nonzero $\xv \in \R^{\dP}$, 
there exists $i\in\{1,2,...,d\}$ such that 
\[
\eta_i(\xv) = \sum_{j=1}^m \mu^{\rm{max}}_j(\xv) \, H_{ji} \neq 0 ,
\]
where
\[
\mu_j^{\rm{max}}(\xv) = \max_{k\colon v_j^k\neq 0} \, \basT^\trans v^k \cdot \xv , 
\quad \text{for } j = 1 \ldots, m .
\]
\end{theorem}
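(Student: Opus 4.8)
The plan is to reduce to Lemma~\ref{lem:proper_rays} and then read off the characterization from the asymptotics already established in its proof. By Lemma~\ref{lem:proper_rays}, the map $f$ is proper if and only if the ray condition~\eqref{eq:ray_condition} holds for every nonzero $\xv \in \R^{\dP}$. The crucial ingredient is Equation~\eqref{eq:final}: for each fixed nonzero $\xv$ and each $i \in \{1,\ldots,d\}$,
\[
f_i(\xv t) \, \e^{-\eta_i(\xv) t} \to \prod_{j=1}^m \Big( \sum_{k \colon \mu^k = \mu_j^{\rm{max}}} v^k_j \Big)^{H_{ji}} > 0 \quad \text{as } t \to \infty ,
\]
with $\eta_i(\xv) = \sum_{j=1}^m \mu_j^{\rm{max}}(\xv)\,H_{ji}$ as in Equation~\eqref{eq:eta}. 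Thus, up to a strictly positive constant, $f_i(\xv t)$ grows like $\e^{\eta_i(\xv) t}$, so the sign of $\eta_i(\xv)$ controls the limiting behavior of the $i$-th coordinate along the ray $\xv t$.

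Next I would establish, for each fixed nonzero $\xv$, the pointwise equivalence that the ray condition fails at $\xv$ if and only if $\eta_i(\xv) = 0$ for all $i$. This follows from a case distinction on the signs of the $\eta_i(\xv)$. If $\eta_i(\xv) > 0$ for some $i$, then $f_i(\xv t) \to \infty$, so $f(\xv t)$ has no limit and the ray condition is vacuously satisfied at $\xv$. If instead $\eta_i(\xv) \le 0$ for all $i$, then $f(\xv t)$ converges to some $z \in \R^d_\ge$ whose $i$-th coordinate equals the strictly positive constant above when $\eta_i(\xv) = 0$ and equals $0$ when $\eta_i(\xv) < 0$. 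Hence the limit lies in the interior $\R^d_>$ (so that the ray condition fails, since $z \notin \partial \R^d_\ge$) exactly when every $\eta_i(\xv) = 0$, while it lies in $\partial \R^d_\ge$ (so that the ray condition holds) exactly when at least one $\eta_i(\xv) < 0$. Combining the two regimes yields the pointwise equivalence.

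Finally I would quantify over $\xv$: the ray condition holds for all nonzero $\xv$ precisely when, for every nonzero $\xv$, the numbers $\eta_i(\xv)$ do not all vanish, i.e.\ there exists $i$ with $\eta_i(\xv) \neq 0$. Together with Lemma~\ref{lem:proper_rays}, this is exactly the asserted characterization. I do not anticipate a genuine obstacle, since the substantive analytic estimates are already contained in the proof of Lemma~\ref{lem:proper_rays}; the only points requiring care are the bookkeeping of the three sign regimes and the observation that a finite limit of $f(\xv t)$ forces $\eta_i(\xv) \le 0$ for all $i$, so that an interior limit---the only way to violate the ray condition---arises exactly when every $\eta_i(\xv)$ vanishes.
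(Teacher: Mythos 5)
Your proposal is correct and follows essentially the same route as the paper: reduce to the ray condition via Lemma~\ref{lem:proper_rays}, then read off from Equation~\eqref{eq:final} that $f_i(\xv t)$ behaves like $\e^{\eta_i(\xv)t}$ times a positive constant, so the ray condition fails at $\xv$ precisely when all $\eta_i(\xv)$ vanish. Your write-up is merely more explicit about the three sign regimes than the paper's terse two-line argument.
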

\begin{proof}
Using Lemma~\ref{lem:proper_rays},
we characterize when $\map$ is ``proper along rays''.
For nonzero $\xv \in \R^{\dP}$, 
the ray condition~\eqref{eq:ray_condition} is equivalent to either $|\map(\xv t)| \to \infty$ or $\map(\xv t) \to z \in \partial \R^d_\ge$ as $t \to \infty$.
Using the limit~\eqref{eq:limit},
this is equivalent to either $\eta_i(\xv)>0$ for some $i \in \{1,\ldots,d\}$
or $\eta_i(\xv)\le0$ for all $i$ and $\eta_i(\xv)<0$ for some $i$.
It is easy to see that this is equivalent to $\eta_i(\xv)\neq0$ for some $i$.
\end{proof}

In fact, we can characterize properness without the basis matrices $\basT$ and $H$ (of the linear subspaces $T$ and $D$),
but just in terms of the (vertices of the) polytope and the subspaces.

\begin{theorem} \label{thm:proper_2}
The map~$\map$ defined in Equation~\eqref{eq:F} is {\em not} proper
if and only if
there is a nonzero $t \in T$ such that
\[
\mu^{\rm{max}}(t) \in D^\perp ,
\]
where
\[
\mu_j^{\rm{max}}(t) = \max_{k\colon v_j^k\neq 0} \, v^k \cdot t , 
\quad \text{for } j = 1 \ldots, m .
\]
\end{theorem}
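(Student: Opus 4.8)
The plan is to obtain Theorem~\ref{thm:proper_2} as a direct reformulation of the negation of Theorem~\ref{thm:proper_1}, translating the matrix-level condition into an intrinsic condition on the subspaces $T$ and $D$. First I would negate Theorem~\ref{thm:proper_1}: the map $f$ is \emph{not} proper if and only if there exists a nonzero $\xv \in \R^{\dP}$ such that $\sum_{j=1}^m \mu^{\rm{max}}_j(\xv)\, H_{ji} = 0$ for all $i \in \{1,\ldots,d\}$. The next step is to read this system of $d$ equations as the single vector equation $H^\trans \mu^{\rm{max}}(\xv) = 0$, where $\mu^{\rm{max}}(\xv) \in \R^m$ is the vector with entries $\mu^{\rm{max}}_j(\xv)$. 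Since $D = \im H$, we have $\ker H^\trans = (\im H)^\perp = D^\perp$, so this equation is equivalent to $\mu^{\rm{max}}(\xv) \in D^\perp$.

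It remains to pass from the parameter $\xv \in \R^{\dP}$ to an element $t \in T$. Here I would use that $\basT$ is a basis matrix of $T$, so $\ker \basT = \{0\}$ and the linear map $\xv \mapsto t := \basT \xv$ is a bijection from $\R^{\dP}$ onto $T = \im \basT$ that carries nonzero vectors to nonzero vectors. The adjoint identity $\basT^\trans v^k \cdot \xv = v^k \cdot (\basT \xv) = v^k \cdot t$ then shows that the quantity $\mu^{\rm{max}}_j(\xv) = \max_{k\colon v^k_j \neq 0} \basT^\trans v^k \cdot \xv$ from Theorem~\ref{thm:proper_1} coincides with $\mu^{\rm{max}}_j(t) = \max_{k\colon v^k_j \neq 0} v^k \cdot t$ as defined in the present statement. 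Substituting $t = \basT \xv$ thus turns ``there exists a nonzero $\xv$ with $\mu^{\rm{max}}(\xv) \in D^\perp$'' into ``there exists a nonzero $t \in T$ with $\mu^{\rm{max}}(t) \in D^\perp$'', which is exactly the condition of the theorem.

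The argument is essentially a change of description rather than a new analytic input, so I do not expect a serious obstacle; the work of controlling unbounded sequences along rays was already carried out in Lemma~\ref{lem:proper_rays} and Theorem~\ref{thm:proper_1}. The one point that needs care is verifying that the substitution $t = \basT \xv$ genuinely gives a bijection onto all of $T$, so that quantifying over nonzero $t \in T$ is equivalent to quantifying over nonzero $\xv \in \R^{\dP}$; this relies precisely on $\basT$ having trivial kernel and image equal to $T$. Equally, one must confirm the identification $\ker H^\trans = D^\perp$, which is immediate from $D = \im H$.
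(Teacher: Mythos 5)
Your proposal is correct and follows essentially the same route as the paper: negate Theorem~\ref{thm:proper_1}, rewrite the $d$ scalar conditions as $H^\trans \mu^{\rm{max}}(\xv) = 0$, identify $\ker H^\trans = D^\perp$, and use the adjoint identity $\basT^\trans v^k \cdot \xv = v^k \cdot \basT\xv$ together with the bijection $\xv \mapsto t = \basT\xv$ onto $T$. Your explicit remark that this substitution is a bijection carrying nonzero vectors to nonzero vectors (since $\ker\basT = \{0\}$ and $\im\basT = T$) is a point the paper leaves implicit, but the argument is the same.
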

\begin{proof}
By Theorem~\ref{thm:proper_1},
the non-properness of $\map$ is equivalent to the existence of a nonzero $\xv \in \R^{\dP}$ such that
$\sum_{j=1}^m \mu^{\rm{max}}_j(\xv) \, H_{ji} = 0$,
for all $i\in\{1,2,...,d\}$,
that is,
\[
H^\trans \mu^{\rm{max}}(\xv) = 0 .
\]
Since
\[
\mu_j^{\rm{max}}(\xv) = \max_{k\colon v_j^k\neq 0} \, \basT^\trans v^k \cdot \xv
= \max_{k\colon v_j^k\neq 0} \, v^k \cdot \basT \xv ,
\]
$\im \basT = T$, and $\ker H^\trans = D^\perp$,
this is further equivalent to the existence of a nonzero $t = \basT \xv \in T$ such that $\mu^{\rm{max}}(t) \in D^\perp$.
\end{proof}


\subsection{\texorpdfstring{$\map$}{pdf} being a diffeomorphism} \label{subsec:diffeo}

We can now combine our results on local invertibilty and properness.

\begin{theorem} \label{thm:main}
The map~$\map$ defined in Equation~\eqref{eq:F}
is a \green{diffeomorphism} 
if and only if
\begin{enumerate}[(i)]
\item 
there do not exist $y \in P$ and nonzero $t \in T$ and $\bar d \in D^\perp$ such that $t = y \circ \bar d$,
and
\item  
for every nonzero $t \in T$,  
\[
\mu^{\rm{max}}(t) \notin D^\perp ,
\]
where
\[
\mu_j^{\rm{max}}(t) = \max_{k\colon v_j^k\neq 0} \, v^k \cdot t , 
\quad \text{for } j = 1 \ldots, m .
\]

\end{enumerate}
\end{theorem}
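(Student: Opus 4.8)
The plan is to obtain Theorem~\ref{thm:main} as a direct assembly of the three pillars already in place: Hadamard's global inversion theorem (Theorem~\ref{thm:hadamard_maps}), together with the characterizations of local invertibility (Theorem~\ref{thm:local_invert_2}) and of properness (Theorem~\ref{thm:proper_2}). No fresh analysis of $f$ should be needed; the work reduces to threading the equivalences and to negating the two auxiliary theorems correctly.

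First I would record that the hypotheses of Hadamard's theorem are met for $f \colon \R^{\dP} \to \R^d_>$. The codomain $\R^d_>$ is open and convex, $f$ is a $C^1$-map (indeed real-analytic, being the composition $h \circ p$ of the moment parametrization~$p$ and the power map~$h$), and, as noted after Proposition~\ref{pro:bijection_unique_solution}, we are in the balanced case $\dP = d$, so that $J_f(\xv)$ is a square matrix. Combining Proposition~\ref{pro:bij_diff} (bijectivity $\Leftrightarrow$ diffeomorphism for $C^1$-maps) with Theorem~\ref{thm:hadamard_maps} then gives
\[
f \text{ bijective} \iff f \text{ locally invertible and } f \text{ proper}.
\]
Here local invertibility means $\det J_f(\xv) \neq 0$ for all $\xv$, and since $J_f$ is square this is exactly the injectivity of $J_f(\xv)$ for all $\xv$ used in Theorems~\ref{thm:local_invert_1} and~\ref{thm:local_invert_2}.

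Next I would match each surviving condition to an item of the statement. Condition~(i) is verbatim the negation of the criterion in Theorem~\ref{thm:local_invert_2} for $f$ to fail local invertibility; hence (i) holds if and only if $f$ is locally invertible. Likewise, condition~(ii) is the negation of the criterion in Theorem~\ref{thm:proper_2} for $f$ to fail properness; hence (ii) holds if and only if $f$ is proper. Substituting both equivalences into the displayed line yields the theorem.

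The argument carries essentially no obstacle of its own, since the genuine content has been deferred to the earlier results. The only points requiring care are bookkeeping ones: verifying that ``$J_f$ injective everywhere'' and ``$\det J_f \neq 0$ everywhere'' coincide (which is precisely why the balanced dimension $\dP = d$ must be invoked), and confirming that the codomain $\R^d_>$ is open and convex so that Theorem~\ref{thm:hadamard_maps} applies. Once these are noted, the conclusion follows by negating Theorems~\ref{thm:local_invert_2} and~\ref{thm:proper_2} and feeding the results into Hadamard's theorem.
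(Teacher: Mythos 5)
Your proposal is correct and is exactly the argument the paper intends (the paper states Theorem~\ref{thm:main} without a separate proof, as the immediate combination of Proposition~\ref{pro:bij_diff}, Theorem~\ref{thm:hadamard_maps}, and the negations of Theorems~\ref{thm:local_invert_2} and~\ref{thm:proper_2}). Your added bookkeeping remarks --- that $\dP=d$ makes $J_f$ square so injectivity of $J_f$ coincides with $\det J_f\neq 0$, and that $\R^d_>$ is open and convex so Hadamard applies --- are accurate and consistent with the paper's setup.
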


Recall that, by Propositions~\ref{pro:reform} and~\ref{pro:diff}
and Theorem~\ref{thm:hadamard_maps},
conditions (i) and (ii) in Theorem~\ref{thm:main} are equivalent to $|Y_c|=1$ and $y \in Y_c$ is \green{nondegenerate} for all~$c$,
that is, the existence of a unique, \green{nondegenerate} solution in the coefficient polytope for all parameters.
If $\dim L = n$,
this is further equivalent to $|Z_c|=1$ and $x \in Z_c$ is \green{nondegenerate} for all~$c$,
that is, the existence of a unique, \green{nondegenerate} solution to Equation~\eqref{eq:system} for all parameters.


\subsection{Sufficient conditions} \label{subsec:suf_con}

From Theorem~\ref{thm:local_invert_2},
we immediately obtain a sufficient condition for local invertibility
which only involves sign vectors of linear subspaces.

\begin{proposition} \label{pro:local_invert_suff}
The map~$\map$ defined in Equation~\eqref{eq:F} is locally invertible
if $\sign(T) \cap \sign(D^\perp) = \{0\}$.
\end{proposition}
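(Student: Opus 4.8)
The plan is to derive the result directly from Theorem~\ref{thm:local_invert_2}, which characterizes non-local-invertibility of $f$ as the existence of $y \in P$ together with nonzero $t \in T$ and $\bar d \in D^\perp$ satisfying the componentwise relation $t = y \circ \bar d$. Since the proposition is a sufficient condition for local \emph{invertibility}, I would prove the contrapositive: assuming $f$ is not locally invertible, I would produce a common nonzero sign vector in $\sign(T) \cap \sign(D^\perp)$, thereby showing that the hypothesis $\sign(T) \cap \sign(D^\perp) = \{0\}$ must fail.

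First I would invoke Theorem~\ref{thm:local_invert_2} to obtain $y \in P$ and nonzero $t \in T$, $\bar d \in D^\perp$ with $t = y \circ \bar d$. The key observation is the sign relation: because $y \in P \subseteq \R^m_>$ is a strictly positive vector, multiplication by $y$ componentwise preserves signs, i.e.\ $\sign(y_j \bar d_j) = \sign(\bar d_j)$ for every index $j$. Hence, from $t = y \circ \bar d$, I get $\sign(t) = \sign(\bar d)$ componentwise, so $t$ and $\bar d$ have the \emph{same} sign vector. Since $t \in T$ is nonzero, this sign vector $\sigma := \sign(t) = \sign(\bar d)$ is a nonzero element of $\{-,0,+\}^m$, and it lies in both $\sign(T)$ (witnessed by $t \in T$) and $\sign(D^\perp)$ (witnessed by $\bar d \in D^\perp$). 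Therefore $\sigma \in \big(\sign(T) \cap \sign(D^\perp)\big) \setminus \{0\}$, contradicting the assumption. This establishes local invertibility and completes the contrapositive argument.

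The only subtlety worth stating carefully is that $t$ and $\bar d$ need not be equal, and in fact $\bar d$ could vanish in some coordinate even where one might naively expect otherwise; but since $y$ is strictly positive, $t_j = 0 \iff \bar d_j = 0$ and the signs agree wherever nonzero, so the sign vectors coincide exactly. I would also note that nonzero-ness transfers correctly: $t \neq 0$ forces $\sigma \neq 0$, which is what makes the intersection nontrivial. I expect no real obstacle here; the entire content is the sign-preservation under multiplication by a positive vector, and the proof is essentially a two-line deduction once Theorem~\ref{thm:local_invert_2} is in hand. The main care is simply bookkeeping that the witnessed sign vector lies in both sets simultaneously and is nonzero.

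\begin{proof}
We prove the contrapositive. Suppose $f$ is not locally invertible. By Theorem~\ref{thm:local_invert_2}, there exist $y \in P$ and nonzero $t \in T$ and $\bar d \in D^\perp$ such that $t = y \circ \bar d$. Since $y \in P \subseteq \R^m_>$ has strictly positive entries, multiplication by $y$ componentwise preserves signs, so that $\sign(t) = \sign(y \circ \bar d) = \sign(\bar d)$. Let $\sigma := \sign(t) = \sign(\bar d) \in \{-,0,+\}^m$. As $t \in T$ is nonzero, $\sigma \neq 0$. Moreover, $\sigma = \sign(t) \in \sign(T)$ and $\sigma = \sign(\bar d) \in \sign(D^\perp)$. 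Hence $\sigma$ is a nonzero element of $\sign(T) \cap \sign(D^\perp)$, so $\sign(T) \cap \sign(D^\perp) \neq \{0\}$. Contrapositively, if $\sign(T) \cap \sign(D^\perp) = \{0\}$, then $f$ is locally invertible.
\end{proof}
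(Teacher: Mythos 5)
Your proof is correct and is exactly the argument the paper intends: the paper states the proposition as an immediate consequence of Theorem~\ref{thm:local_invert_2} without writing out the details, and your contrapositive argument (using that $y \in P \subseteq \R^m_>$ is strictly positive, so $\sign(t) = \sign(y \circ \bar d) = \sign(\bar d)$) is precisely the omitted two-line deduction.
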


Such a sign vector condition 
(however, for different subspaces)
appeared in previous work~\cite{MuellerRegensburger2012, Mueller2016}
as an equivalent condition for the injectivity of parametrized families of generalized polynomial/exponential maps.
In the context of reaction networks,
such conditions characterize the nonexistence of multiple complex-balanced equilibria/toric steady states~\cite{PerezMillanEtAl2012,MuellerRegensburger2012,Mueller2016}.

A sufficient condition for properness requires more preparation.
We recall notions from polyhedral geometry
and adapt general definitions for the polytope $\overline P$ (the closure of the coefficient polytope).

For a general polytope $P \subset \R^n$, we denote the face lattice (the set of all faces, partially ordered by set inclusion) by ${\mathcal L}_P$,
and for a set $X \subset \R^n$, we introduce the set of identically zero components,
$\zero(X) = \{ i \in [n] \mid {x_i=0} \text{ for all } {x \in X} \}$.

Recall that $\overline P \subset \R^m_\ge$ lies in an affine subspace with associated linear subspace~${T \subset \R^m}$.
To decide properness via Theorem~\ref{thm:proper_2},
we need to vary over $t \in T$
and determine
\[
\mu_j^{\rm{max}}(t) = \max_{k\colon v_j^k\neq 0} \, v^k \cdot t , 
\quad \text{for } j = 1 \ldots, m .
\]
To guarantee properness,
we partition $T$ into polyhedral cones corresponding to the faces of $\overline P$.
For $\face \in \mathcal{L}_{\overline P}$,
we define the cone
\[
C_\face = \{ t \in T \mid \face = \argmax_{x \in P} \, t \cdot x \} .
\]
In terms of polyhedral geometry,
$C_\face$ is the projection (to $T$) of the relative interior of the normal cone of $\face$.
The cones are disjoint
and ${\bigcup_{\face \in \mathcal{L}_{\overline P}} C_\face = T}$.
Further, for $\face \in \mathcal{L}_{\overline P}$,
we define the sign vector $\tau_\face \in \{0,+\}^m$ with 
\[
\supp(\tau_\face) = \zero(\face) ,
\]
that is, $\tau_\face$ indicates the active nonnegativity constraints that define~$\face$,
explicitly, $(\tau_\face)_i = +$ if and only if $x_i = 0$ for all $x \in \face$.

Now,
let $\mu^* (t) := \max_j \mu^{\rm{max}}_j (t) \in \R$
and
\begin{equation}
\delta\mu^{\rm{max}} (t) := \mu^{\rm{max}} (t) - \mu^* (t) \, \ones_m \in \R^m_\le ,
\end{equation}
that is, $\delta\mu^{\rm{max}} (t)$ is nonpositive.
Since $\ones_m \in D^\perp$,
\[
\delta\mu^{\rm{max}} (t) \in D^\perp
\quad \text{if and only if} \quad
\mu^{\rm{max}} (t) \in D^\perp .
\]

Most importantly,
the signs of $\delta\mu^{\rm{max}} (t)$ are fixed for $t$ within the ``projected normal cones'' introduced above.
\begin{lemma} \label{lem:sign_mu}
Let $P$ be the coefficient polytope. 
Further, let $\face \in \mathcal{L}_{\overline P}$ and $t \in C_\face$.
Then,
\begin{enumerate}[(i)]
\item 
$\delta\mu^{\rm{max}}_j (t)<0$, if $j \in \zero(\face)$.
\item 
$\delta\mu^{\rm{max}}_j (t)=0$, otherwise.
\end{enumerate}
\end{lemma}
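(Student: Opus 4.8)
The plan is to identify $\mu^*(t)$ with the maximal value $M(t) := \max_{x \in \overline P} t \cdot x$ of the linear functional $x \mapsto t \cdot x$ over the polytope, and then to read off both cases of the lemma according to whether the vertices of the maximizing face $F$ contribute to the maximum defining $\mu_j^{\rm{max}}(t)$. Write $\ver(F) \subseteq [\nP]$ for the index set of vertices of $F$, so that $F = \conv\{v^k \mid k \in \ver(F)\}$. The hypothesis $t \in C_F$ means, by the normal-cone description of $C_F$, that $t \cdot x$ attains its maximum over $\overline P$ precisely on $F$; equivalently, $v^k \cdot t = M(t)$ for $k \in \ver(F)$, while $v^k \cdot t < M(t)$ for $k \in [\nP] \setminus \ver(F)$.

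First I would show $\mu^*(t) = M(t)$. Unwinding the definitions, $\mu^*(t) = \max_j \max_{k \colon v_j^k \neq 0} v^k \cdot t$ is the maximum of $v^k \cdot t$ over all index pairs $(j,k)$ with $v_j^k \neq 0$; this value depends on $k$ alone, and a given $k$ contributes as soon as $v^k \neq 0$. Since each vertex lies in the standard simplex $\Delta$ (so $\ones_m \cdot v^k = 1$ and in particular $v^k \neq 0$), every $k$ contributes, and hence $\mu^*(t) = \max_{k \in [\nP]} v^k \cdot t = M(t)$. At the same time, for every coordinate $j$ the index set $\{k \colon v_j^k \neq 0\}$ is non-empty: any point of the relative interior $P \subset \R^m_>$ has strictly positive $j$-th coordinate, so the $j$-th coordinate cannot vanish at all vertices simultaneously. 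This ensures that each $\mu_j^{\rm{max}}(t)$ is a maximum over a non-empty set.

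Next I would treat the two cases. For (ii), assume $j \notin \zero(F)$, so that $x_j \neq 0$ for some $x \in F$. Since $F = \conv\{v^k \mid k \in \ver(F)\}$ and all coordinates are nonnegative, some vertex $v^k$ with $k \in \ver(F)$ must satisfy $v_j^k > 0$ (otherwise every convex combination, hence every $x \in F$, would have $x_j = 0$). For this $k$ we have $v_j^k \neq 0$ and $v^k \cdot t = M(t)$, so $\mu_j^{\rm{max}}(t) \geq v^k \cdot t = M(t) = \mu^*(t)$; together with the trivial bound $\mu_j^{\rm{max}}(t) \leq \mu^*(t)$ this forces $\mu_j^{\rm{max}}(t) = \mu^*(t)$, i.e.\ $\delta\mu^{\rm{max}}_j(t) = 0$. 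For (i), assume $j \in \zero(F)$, so that $v_j^k = 0$ for every $k \in \ver(F)$. Then the non-empty set $\{k \colon v_j^k \neq 0\}$ is contained in $[\nP] \setminus \ver(F)$, and every such $k$ satisfies $v^k \cdot t < M(t)$; taking the maximum over this set gives $\mu_j^{\rm{max}}(t) < M(t) = \mu^*(t)$, i.e.\ $\delta\mu^{\rm{max}}_j(t) < 0$.

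The only delicate points, both minor, are of a bookkeeping nature: one must check that the index set $\{k \colon v_j^k \neq 0\}$ over which $\mu_j^{\rm{max}}(t)$ is defined is non-empty (using $P \subset \R^m_>$, so that the maximum is meaningful), and one must invoke the defining property of $t \in C_F$ to match membership in $\ver(F)$ with equality versus strict inequality against $M(t)$. Once $\mu^*(t) = M(t)$ is in place and the maximizers of $t \cdot x$ over $\overline P$ are identified with the vertices of $F$, both statements of the lemma follow immediately.
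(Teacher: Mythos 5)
Your proof is correct and follows essentially the same route as the paper's: use the defining property of $t \in C_F$ to split the vertices into those of $F$ (where $v^k \cdot t$ attains the maximum) and the rest (where it is strictly smaller), then case-split on whether $j \in \zero(F)$. The only difference is that you make explicit two points the paper leaves implicit — that $\mu^*(t)$ coincides with $\max_k v^k\cdot t$ and that each index set $\{k \mid v^k_j \neq 0\}$ is non-empty — which is careful bookkeeping but not a different argument.
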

\begin{proof}
Let $\ver(\face)$ be the set of vertices of the face~$\face$. 
By the definition of~$C_\face$, 
\begin{itemize}
    \item $v^k \cdot t = \mu^* (t)$ for $v^k \in \ver(\face)$.
    \item $v^k \cdot t < \mu^* (t)$ for $v^k \not\in \ver(\face)$.
\end{itemize}
We consider the two cases.
\begin{enumerate}[(i)]
\item 
$j \in\zero(\face)$: 
Here, 
$v^k \in \ver(\face)$ implies $v^k_j = 0$,
that is,
$v^k_j \neq 0$ implies $v^k \not\in \ver(\face)$.
Hence,
\[
\mu_j^{\rm{max}} (t) 
= \max_{k \colon v_j^k\neq 0} \, v^k \cdot t
< \mu^* (t)
\]
and $\delta\mu_j^{\max}(t) = \mu_j^{\rm{max}} (t) - \mu^* (t) < 0$.
\item 
$j \not\in \zero(\face)$: 
Here, there is $v^k \in \ver(\face)$ such that $v^k_j \neq 0$.
Hence,
\[
\mu_j^{\rm{max}} (t) 
= \max_{k \colon v_j^k\neq 0} \, v^k \cdot t
= \mu^* (t)
\]
and $\delta\mu_j^{\max}(t) = \mu_j^{\rm{max}} (t) - \mu^* (t) = 0$.
\end{enumerate}
\end{proof}

Now, we can provide a sufficient condition for properness
involving only sign vectors of the linear subspace $D$ and the polytope $P$.
\begin{proposition} \label{pro:proper_suff}
If the map~$\map$ defined in Equation~\eqref{eq:F} is {\em not} proper,
then there exists a face $\face \in \mathcal{L}_{\overline P}$
with corresponding sign vector $\tau_\face \in \{0,+\}^m$ 
such that $\tau_\face \in \sign(D^\perp)$.
\end{proposition}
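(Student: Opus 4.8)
The plan is to prove the contrapositive, which is already the logical form in which the statement is phrased: assuming $f$ is not proper, I must produce a face $F \in \mathcal{L}_{\overline P}$ whose sign vector $\tau_F$ lies in $\sign(D^\perp)$. First I would invoke Theorem~\ref{thm:proper_2}, which characterizes non-properness: there exists a nonzero $t \in T$ such that $\mu^{\rm{max}}(t) \in D^\perp$. Using the observation recorded just before Lemma~\ref{lem:sign_mu}, namely that $\ones_m \in D^\perp$ forces the equivalence $\mu^{\rm{max}}(t) \in D^\perp \iff \delta\mu^{\rm{max}}(t) \in D^\perp$, I would immediately rephrase this as $\delta\mu^{\rm{max}}(t) \in D^\perp$ for some nonzero $t \in T$.

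The next step is to locate this witness $t$ inside one of the projected normal cones. Since the cones $\{C_F\}_{F \in \mathcal{L}_{\overline P}}$ are disjoint and cover $T$ (as stated in the paragraph defining $C_F$), there is a unique face $F$ with $t \in C_F$. Now I would apply Lemma~\ref{lem:sign_mu} to this $t$: it tells us that $\delta\mu^{\rm{max}}_j(t) < 0$ precisely when $j \in \zero(F)$, and $\delta\mu^{\rm{max}}_j(t) = 0$ otherwise. In sign-vector language this reads $\sign(\delta\mu^{\rm{max}}(t)) = -\tau_F$, because $\tau_F$ is defined by $\supp(\tau_F) = \zero(F)$ with entries in $\{0,+\}$, so its negative has support $\zero(F)$ with entries in $\{0,-\}$, matching the nonpositive vector $\delta\mu^{\rm{max}}(t)$.

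It remains to transfer this membership from the specific vector $\delta\mu^{\rm{max}}(t)$ to the sign vector $\tau_F$. Here I would use that $D^\perp$ is a linear subspace: since $\delta\mu^{\rm{max}}(t) \in D^\perp$, we have $-\delta\mu^{\rm{max}}(t) \in D^\perp$ as well, and this vector has sign vector exactly $\tau_F$. Therefore $\tau_F \in \sign(D^\perp)$, as desired. The argument is essentially a relabeling once the two prior results are in hand.

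The main obstacle, such as it is, lies in the bookkeeping of signs rather than in any deep step: one must be careful that $\delta\mu^{\rm{max}}(t)$ is nonpositive (entries in $\{0,-\}$) while $\tau_F$ is nonnegative (entries in $\{0,+\}$), so the passage between them requires negation, and it is negation that is harmless here only because $D^\perp$ is a subspace closed under scalar multiplication. A secondary subtlety is that Lemma~\ref{lem:sign_mu} pins down the \emph{strict} negativity of $\delta\mu^{\rm{max}}_j(t)$ on $\zero(F)$, which is exactly what is needed to guarantee that the support of the sign vector is all of $\zero(F)$ and not a proper subset; without strictness one could only conclude $\tau_F \le$ something in $\sign(D^\perp)$, which would weaken the statement. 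Everything else is a direct chaining of Theorem~\ref{thm:proper_2}, the $\ones_m \in D^\perp$ reduction, and Lemma~\ref{lem:sign_mu}.
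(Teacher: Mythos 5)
Your proposal is correct and follows essentially the same route as the paper's proof: invoke Theorem~\ref{thm:proper_2} to obtain a nonzero $t \in T$ with $\mu^{\rm{max}}(t) \in D^\perp$, pass to $\delta\mu^{\rm{max}}(t)$ via $\ones_m \in D^\perp$, locate $t$ in the cone $C_F$ of a unique face $F$, and apply Lemma~\ref{lem:sign_mu} to identify $\sign(-\delta\mu^{\rm{max}}(t)) = \tau_F$, hence $\tau_F \in \sign(D^\perp)$. Your remarks on the sign bookkeeping and the role of strict negativity on $\zero(F)$ are accurate refinements of the same argument (though note the argument is a direct implication, not a contrapositive).
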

\begin{proof}
By Theorem~\ref{thm:proper_2},
if the map~$\map$ is not proper, there exists a nonzero $t \in T$ such that 
\[
- \delta\mu^{\rm{max}} (t) \in D^\perp .
\]
Let $\face \in \mathcal{L}_{\overline P}$ such that $t \in C_\face$.
By Lemma~\ref{lem:sign_mu}, $-\delta\mu^{\rm{max}}_j (t) > 0$ if $j \in \zero(\face)$,
and $\delta\mu^{\rm{max}}_j (t) = 0$ otherwise. 
That is, $\sign(-\delta\mu^{\max} (t)) = \tau_\face$,
and hence $\tau_\face \in \sign(D^\perp)$.
\end{proof}

Altogether, we obtain sufficient conditions for the map~$\map$ being a diffeomorphism.

\begin{theorem} \label{thm:suff}
The map~$\map$ defined in Equation~\eqref{eq:F} is a \green{diffeomorphism} if
\begin{enumerate}[(i)]
\item
$\sign(D^{\perp}) \cap \sign(T) = \{0\}$, and
\item 
for every face $\face \in \mathcal{L}_{\overline P}$ 
with corresponding sign vector $\tau_\face \in \{0,+\}^m$, 
$\tau_\face \not\in \sign(D^\perp)$.
\end{enumerate}
\end{theorem}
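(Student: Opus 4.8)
The plan is to obtain Theorem~\ref{thm:suff} as an immediate assembly of the two sufficient conditions already proved, namely Proposition~\ref{pro:local_invert_suff} (guaranteeing local invertibility) and Proposition~\ref{pro:proper_suff} (guaranteeing properness), combined with the bijectivity criterion of Theorem~\ref{thm:main}. First I would recall that, by Theorem~\ref{thm:main} (equivalently, by combining Hadamard's global inversion theorem in Theorem~\ref{thm:hadamard_maps} with Proposition~\ref{pro:bij_diff}), the map $f$ is bijective if and only if it is both locally invertible and proper. Hence it suffices to show that hypothesis~(i) forces local invertibility and hypothesis~(ii) forces properness, which will be the two independent steps.

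For local invertibility, I would observe that hypothesis~(i), $\sign(D^\perp)\cap\sign(T)=\{0\}$, is verbatim the hypothesis of Proposition~\ref{pro:local_invert_suff}, so that proposition applies directly. For completeness one can spell out the underlying argument: if $f$ were \emph{not} locally invertible, then Theorem~\ref{thm:local_invert_2} would produce $y\in P$ together with nonzero $t\in T$ and $\bar d\in D^\perp$ satisfying $t=y\circ\bar d$; since $y\in P\subset\R^m_>$ has all components strictly positive, $\sign(t)=\sign(y\circ\bar d)=\sign(\bar d)$, so this common nonzero sign vector lies in $\sign(T)\cap\sign(D^\perp)$, contradicting~(i). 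Thus~(i) yields local invertibility.

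For properness, I would invoke the contrapositive of Proposition~\ref{pro:proper_suff}: were $f$ \emph{not} proper, that proposition (resting on Theorem~\ref{thm:proper_2} and Lemma~\ref{lem:sign_mu}) would supply a face $F\in\mathcal{L}_{\overline P}$ whose associated sign vector $\tau_F\in\{0,+\}^m$ belongs to $\sign(D^\perp)$, directly contradicting hypothesis~(ii). Hence~(ii) yields properness. Having established both local invertibility and properness, bijectivity follows from Theorem~\ref{thm:main}.

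I do not anticipate any genuine obstacle here: the substantive content lives entirely in the previously established Propositions~\ref{pro:local_invert_suff} and~\ref{pro:proper_suff} (and the characterizations in Theorems~\ref{thm:local_invert_2} and~\ref{thm:proper_2}), so the proof of Theorem~\ref{thm:suff} is purely a matter of matching hypotheses and concatenating the two implications. The only point that warrants a careful check is that the two conditions stated in Theorem~\ref{thm:suff} coincide \emph{exactly} with the hypotheses of those two propositions (up to the harmless commutativity of the intersection $\sign(D^\perp)\cap\sign(T)$), which they do.
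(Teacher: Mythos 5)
Your proposal is correct and follows exactly the paper's own argument: the paper proves Theorem~\ref{thm:suff} by citing Propositions~\ref{pro:local_invert_suff} and~\ref{pro:proper_suff} together with Theorem~\ref{thm:hadamard_maps} and Proposition~\ref{pro:bij_diff}. Your additional unpacking of the sign-vector argument behind Proposition~\ref{pro:local_invert_suff} is accurate and consistent with how the paper derives that proposition from Theorem~\ref{thm:local_invert_2}.
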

\begin{proof}
By Propositions~\ref{pro:local_invert_suff} and \ref{pro:proper_suff} 
(based on Theorem~\ref{thm:hadamard_maps} and Proposition~\ref{pro:diff}).
\end{proof}

Theorem~\ref{thm:suff} only involves sign vectors of linear subspaces and of the face lattice of the coefficient polytope. Hence, it can appropriately be referred to as a multivariate Descartes' rule of signs for exactly one solution.


\section{Example} \label{sec:example}

We provide an example to illustrate the geometric objects introduced in this work and to demonstrate the application of our results.
As noted in the introduction, 
equivalent conditions for the inhomogeneous problem (iii)
are trivially sufficient for the homogeneous problem (III), but are too strong in general.
Whereas the results from previous work~\cite{MuellerHofbauerRegensburger2019} are not applicable in the example,
our sufficient (sign vector) conditions for the existence of a unique, nondegenerate solution do hold.
 
\newcommand{\ap}{\,+\,}
\newcommand{\am}{\,-\,}

\begin{example} 
The parametrized system
\begin{alignat}{9}
c_1 \, x^2y^{-1} &\ap& c_2 \, xy &\am& 2 \, c_3 \, x^{-1}y && &\am& c_5 \, y &= 0 , \\
- c_1 \, x^2y^{-1} &\ap& 3 \, c_2 \, xy &\ap& c_3 \, x^{-1}y &\ap& c_4 \, x &\ap& 2 \, c_5 \, y &= 0 \nonumber
\end{alignat}
can be written as $A \, (c\circ x^B)=0$, where
\[
A=
\begin{pmatrix}
1 & 1 & -2 & 0 & -1 \\
-1 & 3 & 1 & 1 & 2
\end{pmatrix} ,
\]
\[
B=
\begin{pmatrix}
2 & 1 & -1 & 1 & 0\\
-1 & 1 & 1 & 0 & 1
\end{pmatrix} ,
\]
and 
\[
c = (c_1,c_2,c_3,c_4,c_5)^\trans .
\]
Hence,
\[
\mathcal{A}=
\begin{pmatrix}
1 & 1 & -2 & 0 & -1 \\
-1 & 3 & 1 & 1 & 2 \\
1 & 1 & 1 & 1 & 1 
\end{pmatrix} ,
\]
\[
\mathcal{B}=
\begin{pmatrix}
2 & 1 & -1 & 1 & 0 \\
-1 & 1 & 1 & 0 & 1 \\
1 & 1 & 1 & 1 & 1 
\end{pmatrix} ,
\]
$T = \ker \mathcal{A} = \im \basT$ with
\[
\basT = \begin{pmatrix}
1  & 1\\
1  & -5\\
1  & -8\\
-3 & 0\\
0  & 12
\end{pmatrix} ,
\]
and
$D = \ker \mathcal{B} = \im H$ with
\[
H = \begin{pmatrix}
1  & 0\\
0  & 1\\
0  & 1\\
-2 & 0\\
1  & -2
\end{pmatrix} .
\]
In particular, $\dP = \dim(T) = 2$ and $d = \dim(D) = 2$.

Further,
the polytope $\overline P$ has 3 vertices, 
\[
v^1 = 
\frac{1}{5}
\begin{pmatrix}
3\\
0\\
1\\
0\\
1
\end{pmatrix}, 
\quad
v^2 = 
\frac{1}{4}
\begin{pmatrix}
2\\
0\\
1\\
1\\
0
\end{pmatrix}, 
\quad
v^3 = 
\frac{1}{12}
\begin{pmatrix}
7\\
1\\
4\\
0\\
0
\end{pmatrix} ,
\]
which can be obtained as the normalized, nonnegative elementary vectors of $\ker A$,
using the {\tt SageMath} package~\cite{package_elementary_vectors}. See also~\cite{MuellerRegensburger2016}.

The map~$\map$ in the simplified form of Equation~\eqref{eq:F_simple} is given by
\[
\map \colon \R^2 \to \R^2_>, \quad
\xx \mapsto \left( \sum_{k=1}^{3} \e^{ v^k \cdot \basT \xx } v^k \right)^{H} .
\]
Explicitly,
\[
\map(\xx) = \left(v(\xx)\right)^H
= \begin{pmatrix} v_1 \, (v_4)^{-2} \, v_5 \\ v_2 \, v_3 \, (v_5)^{-2} \end{pmatrix}
\]
with
\begin{align*}
v = v(\xx) = \e^{\frac{1}{5} (4,7) \xx} v^1 + \e^{\frac{1}{4} (0,-6) \xx} v^2 + \e^{\frac{1}{12} (12,-30) \xx} v^3 \in \R^5_> .
\end{align*}

To guarantee bijectivity,
we first verify $\sign(D^{\perp})\cap \sign(T) = \{0\}$, using the {\tt SageMath} package~\cite{package_sign_vector_conditions}. 
See also~\cite{AichmayrMuellerRegensburger2024}.
By Proposition~\ref{pro:local_invert_suff}, 
the map~$\map$ is locally invertible.

Second, we verify that,
for all faces $\face \in \mathcal{L}_{\overline P}$,
$\tau_\face \notin \sign(D^\perp)$.
For the vertex~$v^1$,
\[
\tau_{v^1} = (0,+,0,+,0)^\trans ,
\]
and there is no vector $u \in \R^5$ with $\sign(u) = \tau_{v^1}$ in $\ker H^\trans = D^\perp$.
The same holds for $v^2$ and $v^3$. 
Further, for the edge $e^{12} = \overline{v^1v^2}$,
\[
\tau_{e^{12}} = (0,+,0,0,0)^\trans ,
\]
and again there is no vector $u \in \R^5$ with $\sign(u) = \tau_{v^1}$ in $\ker H^\trans = D^\perp$.
The same holds for the edges $e^{23}$ and $e^{31}$.
By Proposition~\ref{pro:proper_suff}, 
the map~$\map$ is proper.

By Theorem~\ref{thm:suff}, the map~$\map$ is a \green{diffeomorphism},
and by Propositions~\ref{pro:reform} and \ref{pro:diff},
$|Y_c|=1$ and $y \in Y_c$ is nondegenerate for all~$c$. 

However, \cite[Theorem 14, condition (ii)]{MuellerHofbauerRegensburger2019} (with~$S$ and~$\tilde S$ substituted by~$T$ and~$D$) does not hold for the example.
\end{example}


\section{Discussion} \label{sec:discussion}

In this work, we have characterized the existence of a unique, \green{nondegenerate} solution $x$
(modulo an exponential manifold) to the homogeneous, parametrized system ${A \, (c \circ x^B) = 0}$ for all parameters $c$,
whereas
previous work~\cite{CraciunGarcia-PuenteSottile2010,MuellerRegensburger2012,Mueller2016,MuellerHofbauerRegensburger2019} concerned uniqueness, existence, or unique existence for the inhomogeneous system ${A \, (c \circ x^B) = z}$. 
Trivially, unique existence for all (suitable) right-hand sides~$z$ and all parameters~$c$ {\em implies} unique existence for $z=0$.
Hence, the (equivalent or sufficient) conditions obtained for the inhomogeneous system in previous work~\cite{MuellerHofbauerRegensburger2019} {\em imply} the equivalent conditions for the homogeneous system obtained in this work, but are too strong in general.
Indeed, as demonstrated by the example in Section~\ref{sec:example},
the results from previous work may not be applicable,
whereas the equivalent conditions in Theorem~\ref{thm:main}
(and even the sufficient conditions in Theorem~\ref{thm:suff})
may hold.

Finally, we mention computational aspects.
The condition for local invertibility is a quadratic quantifier elimination problem~\cite{weispfenning1997quantifier},
and the condition for properness can be translated into a finite number of linear programs.
In a future paper, 
we plan to provide efficient implementations.
In particular,
we plan to extend the {\tt SageMath} packages \cite{package_elementary_vectors,package_sign_vector_conditions},
which already cover the conditions in~\cite{MuellerHofbauerRegensburger2019} as described in~\cite{AichmayrMuellerRegensburger2024}.


\subsubsection*{Acknowledgements.} 
SM was supported by the Austrian Science Fund (FWF), grant DOIs: 
10.55776/P33218 and 10.55776/PAT3748324. 



\bibliographystyle{abbrv}

\bibliography{old,new,MR}

@misc{package_sign_vector_conditions,
  author   = {Aichmayr, Marcus},
  title    ={\textnormal{\texttt{sign\_vector\_conditions}}},
  note    = "\url{{https://github.com/MarcusAichmayr/sign\_vector\_conditions}}",
  version  = {1.0},
  keywords = {package},
}

@misc{package_elementary_vectors,
  author   = {Aichmayr, Marcus},
  title    = {\textnormal{\texttt{elementary\_vectors}}},
  note    = "\url{{https://github.com/MarcusAichmayr/elementary\_vectors}}",
  version  = {1.0},
  keywords = {package},
}

@InProceedings{AichmayrMuellerRegensburger2024,
author="Aichmayr, Marcus S.
and M{\"u}ller, Stefan
and Regensburger, Georg",
title="A {S}age{M}ath Package for Elementary and Sign Vectors with Applications to Chemical Reaction Networks",
booktitle="Mathematical Software -- ICMS 2024",
series="Lecture Notes in Compt. Sci.",
volume="14749",
year="2024",
publisher="Springer Nature Switzerland",
address="Cham",
pages="155--164",
}

@article{MuellerRegensburger2023a,
    author = {M{\"u}ller, Stefan and Regensburger, Georg},
    title = {Parametrized systems of generalized polynomial
inequalities via linear algebra and convex geometry},
    year = {2026},
    journal = {Positivity},
    volume = {30},
    pages = {4},
    doi = {10.1007/s11117-025-01158-4}
}

@article{MuellerRegensburger2023b,
    author = {M{\"u}ller, Stefan and Regensburger, Georg},
    title = {\phantom{}{P}arametrized systems of generalized polynomial equations: first applications to fewnomials},
    year = {2023},
    journal = {arXiv},
    note = {\href{https://arxiv.org/abs/2304.05273}{arXiv:2304.05273} [math.AG]},
}

@Article{MuellerHofbauerRegensburger2019,
  author  = {M{\"u}ller, Stefan and Hofbauer, Josef and Regensburger, Georg},
  title   = {On the bijectivity of families of exponential/generalized polynomial maps},
  journal = {SIAM J. Appl. Algebra Geom.},
  year    = {2019},
  Volume = {3},
  number = {3},
  Pages   = {412--438},
  doi = {10.1137/18M1178153}
}

@Article{FeliuMuellerRegensburger2019,
author = {Feliu, Elisenda and M{\"u}ller, Stefan and Regensburger, Georg},
title = {Characterizing injectivity of classes of maps via classes of matrices},
journal = {Linear Algebra and its Applications},
year = {2019},
Volume  = {580},
Pages    = {236--261},
doi = {10.1016/j.laa.2019.06.015}
}

@Article{Mueller2016,
  Title       = {Sign conditions for injectivity of generalized polynomial maps with applications to chemical reaction networks and real algebraic geometry},
  Author   = {M{\"u}ller, Stefan and Feliu, Elisenda and Regensburger, Georg and Conradi, Carsten and Shiu, Anne and Dickenstein, Alicia},
  Journal  = {Found. Comput. Math.},
  Fjournal = {Foundations of Computational Mathematics. The Journal of the Society for the Foundations of Computational Mathematics},
  Volume = {16},
  number = {1},
  Pages   = {69--97},
  Year      = {2016},
  doi = {10.1007/s10208-014-9239-3}
}

@InProceedings{MuellerRegensburger2014,
  Author                   = {M{\"u}ller, Stefan and Regensburger, Georg},
  Title                    = {Generalized Mass-Action Systems and Positive Solutions of Polynomial Equations with Real and Symbolic Exponents},
  Booktitle                = {Computer Algebra in Scientific Computing -- CASC 2014},
  Year                     = {2014},
  Pages                    = {302--323},
  Series                   = {Lecture Notes in Comput. Sci.},
  Volume                   = {8660},
  Publisher                = {Springer},
  Address                  = {Berlin/Heidelberg},
  doi = {10.1007/978-3-319-10515-4_22}
}

@Article{MuellerRegensburger2012,
  Author                   = {M{\"u}ller, Stefan and Regensburger, Georg},
  Title                    = {Generalized mass action systems: {C}omplex balancing equilibria and sign vectors of the stoichiometric and kinetic-order subspaces},
  Journal                  = {SIAM J. Appl. Math.},
  Year                     = {2012},
  Volume                   = {72},
  number                    = {6},
  Pages                    = {1926--1947},
  doi = {10.1137/110847056}
}

@Article{MuellerRegensburger2016,
  Author                   = {M{\"u}ller, Stefan and Regensburger, Georg},
  Title                    = {Elementary vectors and conformal sums in polyhedral geometry and their relevance for metabolic pathway analysis},
  Journal                  = {Front. Genet.},
  Year                       = {2016},
  Volume                  = {7},
  Number                 = {90},
  Pages                    = {1--11},
  doi = {10.3389/fgene.2016.00090}
}

@misc{ChecaFeliu2025,
      title={An effective criterion for multiple positive zeros of vertically parametrized polynomial systems}, 
      author={Carles Checa and Elisenda Feliu},
      year={2025},
      eprint={2512.07560},
      archivePrefix={arXiv},
      primaryClass={math.AG},
      url={https://arxiv.org/abs/2512.07560}, 
}

@misc{BF2026,
      title={Positive equilibria in mass action networks: geometry and bounds}, 
      author={Murad Banaji and Elisenda Feliu},
      year={2026},
      eprint={2409.06877},
      archivePrefix={arXiv},
      primaryClass={q-bio.MN},
      url={https://arxiv.org/abs/2409.06877}, 
}

@article{weispfenning1997quantifier,
  title={Quantifier elimination for real algebra—the quadratic case and beyond},
  author={Weispfenning, V.},
  journal={Applicable Algebra in Engineering, Communication and Computing},
  volume={8},
  pages={85--101},
  year={1997},
  publisher={Springer}
}

@article{banach1934mehrdeutige,
  title={{\"U}ber mehrdeutige stetige {A}bbildungen},
  author={Banach, S. and Mazur, S.},
  journal={Stud. Math},
  volume={5},
  number={1},
  pages={174--178},
  year={1934},
  publisher={Polska Akademia Nauk. Instytut Matematyczny PAN}
}

@article{gordon1972diffeomorphisms,
  title={On the diffeomorphisms of Euclidean space},
  author={Gordon, W.},
  journal={Am. Math. Mon.},
  volume={79},
  number={7},
  pages={755--759},
  year={1972},
  publisher={Taylor \& Francis}
}

@article{bihan2017descartes,
  title={Descartes’ rule of signs for polynomial systems supported on circuits},
  author={Bihan, F. and Dickenstein, A.},
  journal={Int. Math. Res. Not.},
  volume={2017},
  number={22},
  pages={6867--6893},
  year={2017},
  publisher={Oxford University Press}
}

@article{bihan2020sign,
  title={Sign conditions for the existence of at least one positive solution of a sparse polynomial system},
  author={Bihan, F. and Dickenstein, A. and Giaroli, M.},
  journal={Adv. Math.},
  volume={375},
  pages={107412},
  year={2020},
  publisher={Elsevier}
}

@article{bihan2020lower,
  title={Lower bounds for positive roots and regions of multistationarity in chemical reaction networks},
  author={Bihan, F. and Dickenstein, A. and Giaroli, M.},
  journal={J. Alg.},
  volume={542},
  pages={367--411},
  year={2020},
  publisher={Elsevier}
}

@article{bihan2021optimal,
  title={Optimal {D}escartes’ rule of signs for systems supported on circuits},
  author={Bihan, F. and Dickenstein, A. and Forsg{\aa}rd, J.},
  journal={Math. Ann.},
  volume={381},
  number={3},
  pages={1283--1307},
  year={2021},
  publisher={Springer}
}

@article{feliu2022generalizing,
  title={On generalizing {D}escartes' rule of signs to hypersurfaces},
  author={Feliu, E. and Telek, M.},
  journal={Adv. Math.},
  volume={408},
  pages={108582},
  year={2022},
  publisher={Elsevier}
}

@inproceedings{khovanskii1980class,
  title={A class of systems of transcendental equations},
  author={Khovanskii, A.},
  booktitle={Doklady Akademii Nauk},
  volume={255},
  pages={804--807},
  year={1980},
  organization={Russian Academy of Sciences}
}

@book{Khovanskii1991,
  title={Fewnomials},
  author={Khovanskii, A.},
  volume={88},
  year={1991},
  publisher={American Mathematical Soc.}
}

@book{sottile2011real,
  title={Real solutions to equations from geometry},
  author={Sottile, F.},
  volume={57},
  year={2011},
  publisher={American Mathematical Soc.}
}

@article{telek2023topological,
  title={Topological descriptors of the parameter region of multistationarity: Deciding upon connectivity},
  author={Telek, M{\'a}t{\'e} L{\'a}szl{\'o} and Feliu, Elisenda},
  journal={PLOS Comp. Biol.},
  volume={19},
  number={3},
  pages={e1010970},
  year={2023},
  publisher={Public Library of Science San Francisco, CA USA}
}

@article{bihan2007new,
  title={New fewnomial upper bounds from {G}ale dual polynomial systems, 2007},
  author={Bihan, F. and Sottile, F.},
  journal={Mosc. Math. J.},
  volume={7},
  number={3},
  year={2007}
}

@incollection{bihan2008sharpness,
  title={On the sharpness of fewnomial bounds and the number of components of fewnomial hypersurfaces},
  author={Bihan, F. and Rojas, M. and Sottile, F.},
  booktitle={Algorithms in algebraic geometry},
  pages={15--20},
  year={2008},
  publisher={Springer}
}

@article {BanajiPantea2016,
    AUTHOR = {Banaji, Murad and Pantea, Casian},
     TITLE = {Some results on injectivity and multistationarity in chemical
              reaction networks},
   JOURNAL = {SIAM J. Appl. Dyn. Syst.},
  FJOURNAL = {SIAM Journal on Applied Dynamical Systems},
    VOLUME = {15},
      YEAR = {2016},
    NUMBER = {2},
     PAGES = {807--869},
      ISSN = {1536-0040},
   MRCLASS = {80A30 (15A15 37N99 92C42)},
  MRNUMBER = {3490491},
MRREVIEWER = {Marc R. Roussel},
       DOI = {10.1137/15M1034441},
       URL = {https://doi.org/10.1137/15M1034441},
}

@InProceedings{CraciunGarcia-PuenteSottile2010,
  Title                    = {Some geometrical aspects of control points for toric patches},
  Author                   = {Craciun, G. and Garcia-Puente, L. and Sottile, F.},
  Booktitle                = {Mathematical Methods for Curves and Surfaces},
  Year                     = {2010},
  Editor                   = {D\ae{}hlen, M and Floater, M S and Lyche, T and Merrien, J-L and Morken, K and Schumaker, L L},
  Pages                    = {111--135},
  Publisher                = {Springer},
  Series                   = {Lecture Notes in Comput. Sci.},
  Volume                   = {5862},

  Optaddress               = {Heidelberg},
  Optnote                  = {MMCS 2008, T\o{}nsberg, Norway, June 26-July 1, 2008, Revised Selected Papers}
}

@Article{Feinberg1972,
  Title                    = {Complex balancing in general kinetic systems},
  Author                   = {Feinberg, M.},
  Journal                  = {Arch. Rational Mech. Anal.},
  Year                     = {1972},
  Pages                    = {187--194},
  Volume                   = {49},

  Fjournal                 = {Archive for Rational Mechanics and Analysis},
  ISSN                     = {0003-9527},
  Mrclass                  = {82.35},
  Mrnumber                 = {0413930},
  Mrreviewer               = {Ole J. Heilmann},
  doi = {10.1007/BF00255665}
}

@Book{Fulton1993,
  Title                    = {Introduction to toric varieties},
  Author                   = {Fulton, W.},
  Publisher                = {Princeton University Press},
  Year                     = {1993},

  Address                  = {Princeton, NJ},
  Series                   = {Ann. of Math. Stud.},
  Volume                   = {131},

  ISBN                     = {0-691-00049-2},
  Mrclass                  = {14M25 (14-02 14J30)},
  Mrnumber                 = {1234037},
  Mrreviewer               = {T. Oda},
  Pages                    = {xii+157},
  doi = {10.1007/0-387-27103-1_10}
}

@Article{Horn1972,
  Title                    = {Necessary and sufficient conditions for complex balancing in chemical kinetics},
  Author                   = {Horn, F.},
  Journal                  = {Arch. Rational Mech. Anal.},
  Year                     = {1972},
  Pages                    = {172--186},
  Volume                   = {49},

  Fjournal                 = {Archive for Rational Mechanics and Analysis},
  ISSN                     = {0003-9527},
  Mrclass                  = {82.35},
  Mrnumber                 = {0413929},
  Mrreviewer               = {Ole J. Heilmann},
  doi = {10.1007/BF00255664}
}

@Article{HornJackson1972,
  Title                    = {General mass action kinetics},
  Author                   = {Horn, F. and Jackson, R.},
  Journal                  = {Arch. Rational Mech. Anal.},
  Year                     = {1972},
  Pages                    = {81--116},
  Volume                   = {47},

  Fjournal                 = {Archive for Rational Mechanics and Analysis},
  ISSN                     = {0003-9527},
  Mrclass                  = {80.34},
  Mrnumber                 = {0400923},
  Mrreviewer               = {J. Adler},
  doi = {10.1007/BF00251225}
}

@Article{PerezMillanEtAl2012,
  Title                    = {Chemical reaction systems with toric steady states},
  Author                   = {P{\'e}rez Mill{\'a}n, M. and Dickenstein, A. and Shiu, A. and Conradi, C.},
  Journal                  = {Bull. Math. Biol.},
  Year                     = {2012},
  Number                   = {5},
  Pages                    = {1027--1065},
  Volume                   = {74},

  Doi                      = {10.1007/s11538-011-9685-x},
  Fjournal                 = {Bulletin of Mathematical Biology},
  ISSN                     = {0092-8240},
  Mrclass                  = {92E20 (13F20 34D20)},
  Mrnumber                 = {2909119},
  Url                      = {http://dx.doi.org/10.1007/s11538-011-9685-x}
}

\end{document}